\documentclass[a4paper,11pt,twoside,notitlepage]{amsart}

\usepackage{amsmath,amssymb,amsthm,mathtools,mathrsfs}
\usepackage[shortlabels]{enumitem}
\usepackage{microtype}
\usepackage{xcolor}

\definecolor{mutedgray}{HTML}{667085}
\usepackage[ocgcolorlinks,linkcolor=blue,citecolor=blue,urlcolor=blue]{hyperref}
\usepackage{url}
\numberwithin{equation}{section}
\mathtoolsset{showonlyrefs}
\allowdisplaybreaks

\newtheorem{theorem}{Theorem}[section]
\newtheorem{proposition}[theorem]{Proposition}
\newtheorem{lemma}[theorem]{Lemma}
\newtheorem{corollary}[theorem]{Corollary}
\theoremstyle{remark}

\newcommand{\p}{\partial}

\newcommand{\para}[1]{\vspace{3mm}\noindent\textbf{#1.}}

\theoremstyle{definition}
\newtheorem{definition}[theorem]{Definition}
\newcommand{\R}{\mathbb R}
\newcommand{\N}{\mathbb N}
\newcommand{\wt}{\widetilde}
\newcommand{\eps}{\varepsilon}
\DeclareMathOperator{\supp}{supp}

\DeclareMathOperator{\Tr}{Tr}
\title[Fractional Calder\'on's problem at a single point]{The Calder\'on problem for the fractional Schr\"odinger equation at a single point}

\author[B. Harrach]{Bastian Harrach}
\address{Institute for Mathematics, Goethe University Frankfurt, Germany}
\email{harrach@math.uni-frankfurt.de}

\author[Y.-H. Lin]{Yi-Hsuan Lin}
\address{Department of Applied Mathematics, National Yang Ming Chiao Tung University, Hsinchu, Taiwan \& Fakult\"at f\"ur Mathematik, University of Duisburg-Essen, Essen, Germany}
\email{yihsuanlin3@gmail.com}

\keywords{Fractional Laplacian, Calder\'on's problem, Dirichlet-to-Neumann map, exterior measure data, unique continuation, monotonicity}
\subjclass[2020]{35R30, 26A33, 35J70}
\hypersetup{
	pdftitle={The Calder\'on problem for the fractional Schrödinger equation at a single point},
	pdfauthor={Bastian Harrach, Yi-Hsuan Lin},
	pdfsubject={Determination of a potential from exterior observations at a fixed point},
	pdfkeywords={Fractional Laplacian, Calder\'on problem, exterior measure data, unique continuation, monotonicity}
}

\begin{document}

	\begin{abstract}
		We revisit the Calder\'on problem for the fractional Schr\"odinger equation and prove that the potential is determined by Dirichlet-to-Neumann maps at a single exterior point, with exterior inputs varying in a disjoint open set. This holds for continuous potentials for all $0<s<1$ and for bounded potentials when $s\geq1/4$. For ordered nonnegative bounded potentials, one fixed nonzero nonnegative input and one point measurement suffice for every $0<s<1$.
	\end{abstract}
	
	\maketitle

	\section{Introduction}\label{sec:introduction}
	Let $\Omega\subset\R^n$, $n\geq1$, be a bounded Lipschitz domain, let $0<s<1$, and set $\Omega_e=\R^n\setminus\overline\Omega$. For a real-valued $q\in L^\infty(\Omega)$, consider the exterior problem
	\begin{equation}\label{eq:main}
		\begin{cases}
			((-\Delta)^s+q)u=0 & \text{in }\Omega,\\
			u=f & \text{in }\Omega_e.
		\end{cases}
	\end{equation}
	We assume that zero is not a Dirichlet eigenvalue of $(-\Delta)^s+q$, namely,
	\begin{equation}\label{eq:nonresonance}
		v\in\wt H^s(\Omega),\quad ((-\Delta)^s+q)v=0\text{ in }\Omega
		\implies v=0,
	\end{equation}
	where $\wt H^s(\Omega)$ is the closure of $C_c^\infty(\Omega)$ in $H^s(\R^n)$. Under \eqref{eq:nonresonance}, every $f\in H^s(\R^n)$ gives a unique solution $u_{q,f}\in H^s(\R^n)$ with $u_{q,f}-f\in\wt H^s(\Omega)$; see \cite[Lemma~2.3]{GSU20}. The exterior Dirichlet-to-Neumann (DN) map is $\Lambda_qf=(-\Delta)^su_{q,f}|_{\Omega_e}$. We ask whether the values $(\Lambda_qf)(x_0)$ at one fixed point $x_0\in\Omega_e$, for smooth $f$ supported away from $x_0$, determine $q$.
	
	The fractional Calder\'on problem was introduced in \cite{GSU20}, where uniqueness from Dirichlet and Neumann data on arbitrary exterior open sets was proved using unique continuation and Runge approximation. In \cite{GRSU20}, one fixed nonzero exterior input was shown to be enough when its Neumann data are known on an exterior open set. That result applies to continuous potentials for all $0<s<1$, and to bounded potentials for $s\geq1/4$; the latter case uses unique continuation from sets of positive measure. For stability and instability, see \cite{RS20,ruland2018exponential}, and for monotonicity methods, see \cite{harrach2017nonlocal-monotonicity,harrach2020monotonicity,lin2020monotonicity}.
	
	Other nonlocal inverse problems concern potentials and obstacles \cite{CLL2017simultaneously}, equations with drift \cite{cekic2020calderon}, variable coefficients \cite{GLX, GU2021calder}, higher-order fractional equations \cite{CMRU20}, and semilinear equations \cite{LL2020inverse}. We also refer to \cite{LLR2019calder, LLU2022para} for parabolic equations, \cite{FGKU_2025fractional, FKU24} for problems on closed manifolds, \cite{LZ2023unique} for a porous medium equation, \cite{FL24, LLY25_entangle} for entanglement principles, and \cite{HLW25} for the logarithmic Schr\"odinger equation. Further references are given in \cite{LL25_Integro}.
	
	\para{Main results} Let $W_1,W_2\Subset\Omega_e$ be nonempty open sets with $\overline{W_1}\cap\overline{W_2}=\emptyset$. By Lemma~\ref{lem:exterior-observation}, $\Lambda_qf$ is smooth on $W_2$ for $f\in C_c^\infty(W_1)$. Given a finite signed Borel measure $\mu$ with $\supp\mu\Subset W_2$, we define
	\begin{equation}\label{eq:measurement}
		\mathcal M_{q,\mu}(f)=\int_{W_2}\Lambda_qf\,d\mu.
	\end{equation}
	We fix $\mu$ and let $f$ vary in $C_c^\infty(W_1)$. Taking $\mu=\delta_{x_0}$ gives $\mathcal M_{q,\mu}(f)=(\Lambda_qf)(x_0)$.
	
	\begin{theorem}\label{thm:uniqueness}
		Let $W_1,W_2\Subset\Omega_e$ be nonempty open sets with disjoint closures, and let $\mu\neq0$ be a finite signed Borel measure with $\supp\mu\Subset W_2$. Suppose that $q_1,q_2$ satisfy \eqref{eq:nonresonance} and that either
		\begin{enumerate}[\rm(i)]
			\item\label{item_1_thm} $q_1,q_2\in C^0(\overline\Omega)$ and $0<s<1$, or
			\item\label{item_2_thm} $q_1,q_2\in L^\infty(\Omega)$ and $1/4\leq s<1$.
		\end{enumerate}
		If
		\begin{equation}\label{eq:equal-data}
			\mathcal M_{q_1,\mu}(f)=\mathcal M_{q_2,\mu}(f)
			\quad\text{for every }f\in C_c^\infty(W_1),
		\end{equation}
		then $q_1=q_2$ almost everywhere in $\Omega$. In case \ref{item_1_thm}, equality holds everywhere in $\overline\Omega$.
	\end{theorem}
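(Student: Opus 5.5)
Let $g:=\int_{W_2}\Phi(\cdot-y)\,d\mu(y)$, where $\Phi(z)=c_{n,s}|z|^{-n-2s}$ is the kernel of $(-\Delta)^s$ away from the diagonal. The plan is to reduce \eqref{eq:equal-data} to the statement that one exterior input has identical DN data on an open set, and then apply the single-measurement uniqueness of \cite{GRSU20}. For $f\in C_c^\infty(W_1)$ and $x\in W_2$ we have $f(x)=0$ and $u_{q,f}=f$ on $\Omega_e$. Using the kernel representation (the content of Lemma~\ref{lem:exterior-observation}), this gives
\begin{equation}
	(\Lambda_qf)(x)=-c_{n,s}\int_{\Omega}\frac{u_{q,f}(y)}{|x-y|^{n+2s}}\,dy-c_{n,s}\int_{W_1}\frac{f(y)}{|x-y|^{n+2s}}\,dy .
\end{equation}
The second term does not depend on $q$. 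Integrating against $\mu$ and applying Fubini, \eqref{eq:equal-data} becomes
\begin{equation}
	\int_\Omega (u_{q_1,f}-u_{q_2,f})\,g\,dy=0 \quad\text{for all } f\in C_c^\infty(W_1).
\end{equation}
Since $\supp\mu\Subset W_2$ is at positive distance from $\overline\Omega\cup\overline{W_1}$, the function $g$ is smooth and bounded near $\overline\Omega\cup\overline{W_1}$.

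Next I introduce adjoint solutions. Let $\chi\in C_c^\infty(\R^n\setminus(\overline\Omega\cup \overline{W_1}))$ equal $1$ near $\supp\mu$. Choose a cutoff $\psi\in C_c^\infty(\Omega_e)$ with $\psi=1$ on a neighbourhood of $\supp\mu$ and $\supp\psi\cap(\overline\Omega\cup\overline{W_1})=\emptyset$, and define the smooth exterior input $F:=\chi\cdot(\text{a function whose }(-\Delta)^s\text{ produces }g)$. A cleaner route is to use the Alessandrini-type identity
\begin{equation}
	\int_\Omega (q_1-q_2)\,u_{q_1,f}\,u_{q_2,h}=\langle(\Lambda_{q_1}-\Lambda_{q_2})f,h\rangle ,
\end{equation}
and to note that the pairing with a fixed $h$ is $\int (\Lambda_{q_1}-\Lambda_{q_2})f\,h$. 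Taking $h$ to approximate $\mu$, the data say that $(\Lambda_{q_1}-\Lambda_{q_2})^* \mu=0$ on $W_1$, and by symmetry of DN maps this equals $(\Lambda_{q_1}-\Lambda_{q_2})\mu|_{W_1}=0$, where $\mu$ is now used as an exterior datum. To make this rigorous, I mollify $\mu$ to $h_\eps\in C_c^\infty(W_2)$ and pass to the limit. Alternatively, I define $u_{q_j,\mu}$ directly as $\mu+w_j$, with $w_j\in\wt H^s(\Omega)$ solving $((-\Delta)^s+q_j)w_j=-(-\Delta)^s\mu|_\Omega=-g|_\Omega$ up to sign, which is well posed because $g|_\Omega$ is smooth. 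The symmetry $\langle\Lambda_q f,h\rangle=\langle f,\Lambda_q h\rangle$ extends to $h=\mu$ by this formula. The conclusion is that the single nonzero exterior input $\mu$ has $\Lambda_{q_1}\mu=\Lambda_{q_2}\mu$ on the open set $W_1$.

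Finally I invoke the single-measurement uniqueness theorem of \cite{GRSU20}, in the continuous case for all $s$ and in the bounded case for $s\ge1/4$, applied with datum $\mu$ and observation set $W_1$. The main obstacle is that \cite{GRSU20} assumes a regular datum (e.g.\ $f\in\wt H^s$ of an exterior open set), whereas $\mu$ is only a measure. I would handle this by rereading their argument. Their argument sets $w=u_1-u_2\in\wt H^s(\Omega)$ and observes that $w=0$ and $(-\Delta)^sw=0$ on $W_1$, so unique continuation gives $w\equiv0$. Then $(q_1-q_2)u_1=0$ in $\Omega$, and one needs $u_1\neq0$ on a set that is dense, or of full measure. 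Here $w_j\in H^s$ regardless of $\mu$, so the first step transfers verbatim. For the second step, $u_1=w_1$ in $\Omega$ (because $\mu$ vanishes there). Suppose $u_1$ vanished on an open set (case (i)), or on a set of positive measure (case (ii), using measurable unique continuation for $s\ge1/4$). Then $u_1=\mu$ would satisfy $u_1=0$ and $(-\Delta)^su_1=-q_1u_1=0$ there, so $u_1\equiv0$ on $\R^n$, contradicting $\mu\ne0$. This requires the unique continuation principle for distributions in $H^{-r}$, which I would justify by applying it to $(1-\psi)u_1\in H^s$, with the smooth correction $(-\Delta)^s(\psi\mu)$ away from $\supp\psi$. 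Hence $q_1=q_2$ a.e., and everywhere by continuity in case (i).
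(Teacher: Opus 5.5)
Your reduction matches the paper's. The reciprocity $\int_{W_2}\Lambda_qf\,d\mu=\int_{W_1}f\,(-\Delta)^su_{q,\mu}\,dx$ can be made rigorous along the lines you sketch, for instance via $v_{q,\rho_\eps*\mu}\to v_{q,\mu}$ in $H^s(\R^n)$. Open-set unique continuation for $u_1-u_2=w_1-w_2\in\wt H^s(\Omega)$ on $W_1$ then gives $w_1=w_2$ and $(q_1-q_2)w_1=0$ in $\Omega$. Case (i) is also fine, for a simpler reason than yours. The unique continuation of \cite{GSU20} holds for $h\in H^t(\R^n)$ with any $t\in\R$, and $u_1=w_1+\mu\in H^{-m}(\R^n)$ for $m>n/2$. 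So it applies to $u_1$ directly and no cutoff is needed.

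The genuine gap is in case (ii). The measurable unique continuation of \cite{GRSU20} (Proposition~5.1 there) is proved for $u\in H^s(\R^n)$ solving the homogeneous equation, through the Caffarelli--Silvestre extension in the global weighted energy space. Here $u_1=w_1+\mu\notin H^s(\R^n)$ as soon as $\mu$ has an atom, which is exactly the point case $\mu=\delta_{x_0}$.

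Your cutoff does not repair this. With $\psi=1$ near $\supp\mu$ and $\psi=0$ near $\overline\Omega$, you get $(1-\psi)u_1=w_1$, and $w_1$ solves $((-\Delta)^s+q_1)w_1=F_\mu$ in $\Omega$ with $F_\mu(x)=C_{n,s}\int|x-z|^{-n-2s}\,d\mu(z)$. This equation is inhomogeneous, and the source is not controlled by $w_1$; for $\mu\geq0$ it is bounded below by a positive constant on $\Omega$. Unique continuation fails for inhomogeneous equations with general smooth sources: any $w\in C_c^\infty(\Omega)$ vanishing on a ball is a counterexample. So the ``smooth correction'' is the obstruction itself, not a harmless term.

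The missing idea is to keep the problem homogeneous by extending the whole distribution, $U=P_s*w_1+P_s*\mu$. Above $\Omega$, the measure part satisfies $|P_s*\mu|+|\nabla_x(P_s*\mu)|\lesssim y^{2s}$ and $|\partial_y(P_s*\mu)|\lesssim y^{2s-1}$. Hence $U\in H^1(B_{2r}^+(x_0);y^a)$ locally, with trace $w_1$. The conormal derivative $-\kappa_sF_\mu$ of the measure part cancels the source, so $U$ satisfies the homogeneous local Robin condition with coefficient $-\kappa_sq_1$.

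You then still need a local version of measurable unique continuation. This consists of a doubling inequality at a density point of the zero set, from the Carleman estimate of \cite{GRSU20}, with a rescaling that makes the boundary coefficient small when $s=1/4$. It is followed by a blow-up and compactness argument ending with boundary unique continuation for the limit. This is Lemma~\ref{lem:extension}, Proposition~\ref{prop:local-ucp} and Corollary~\ref{cor:measure-ucp}, the main new ingredient for bounded potentials. Your proposal assumes it rather than proving it.
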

	
	Given $x_0\in\R^n\setminus\overline{\Omega\cup W_1}$, choose $W_2$ containing $x_0$ and take $\mu=\delta_{x_0}$ in Theorem~\ref{thm:uniqueness}. Thus, $(\Lambda_qf)(x_0)$ for all $f\in C_c^\infty(W_1)$ determines $q$. The measure must be nonzero, but it may have either sign.
	
	If two nonnegative potentials are ordered, a single nonnegative input and one point observation suffice, for every $0<s<1$.
	
	\begin{theorem}\label{thm:single-input-point}
		Let $0<s<1$, let $W\Subset\Omega_e$ be a nonempty open set, and let $x_0\in\Omega_e\setminus\overline W$. Suppose that $q_1,q_2\in L^\infty(\Omega)$ satisfy $0\leq q_2\leq q_1$ almost everywhere in $\Omega$. Fix a nonzero function $f\in C_c^\infty(W)$ with $f\geq0$. If $(\Lambda_{q_1}f)(x_0)=(\Lambda_{q_2}f)(x_0)$, then $q_1=q_2$ almost everywhere in $\Omega$.
	\end{theorem}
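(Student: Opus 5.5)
Let $u_j=u_{q_j,f}$ and $w=u_1-u_2\in\wt H^s(\Omega)$. The plan is to express the pointwise difference of the DN maps at $x_0$ through $w$, and then use positivity (a maximum principle) to show that $w$ has a sign and vanishes identically. After that, the equation for $w$ forces $(q_1-q_2)u_2=0$, and the strong positivity of $u_2$ gives $q_1=q_2$.

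\textbf{Step 1 (representation at $x_0$).} Since $x_0\in\Omega_e\setminus\overline W$, we have $f=0$ near $x_0$, and $w$ vanishes outside $\Omega$. Hence
\[
(\Lambda_{q_1}f)(x_0)-(\Lambda_{q_2}f)(x_0)=(-\Delta)^s w(x_0)=-c_{n,s}\int_\Omega \frac{w(y)}{|x_0-y|^{n+2s}}\,dy .
\]
This is legitimate because $x_0\notin\overline\Omega$ makes the kernel smooth and bounded on $\Omega$. The same computation, with the Lemma on exterior observations giving smoothness, justifies the pointwise evaluation. So the hypothesis becomes $\int_\Omega w(y)|x_0-y|^{-n-2s}\,dy=0$.

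\textbf{Step 2 (signs via the weak maximum principle).} Since $q_j\geq0$, the operator $(-\Delta)^s+q_j$ is coercive on $\wt H^s(\Omega)$, so \eqref{eq:nonresonance} holds automatically. It also satisfies a weak maximum principle: if $v\in H^s(\R^n)$ satisfies $((-\Delta)^s+q_j)v\geq0$ weakly in $\Omega$ and $v\geq0$ in $\Omega_e$, then $v\geq0$. To prove this, test with $v^-$ and use $\langle(-\Delta)^s v^+,v^-\rangle\leq0$ together with $q_j\geq0$. Applying it with $f\geq0$ gives $u_j\geq0$.

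Next, $w$ satisfies
\[
((-\Delta)^s+q_1)w=(q_2-q_1)u_2\leq0 \quad\text{in }\Omega,
\]
with $w=0$ outside $\Omega$. Hence $w\leq0$. Combined with Step 1, and since the kernel is strictly positive on $\Omega$, we get $w=0$ a.e. in $\Omega$, so $w=0$.

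\textbf{Step 3 (conclusion).} With $w=0$, the equation gives $(q_1-q_2)u_2=0$ a.e. in $\Omega$. It remains to show $u_2>0$ a.e. in $\Omega$; this is the main obstacle, since $q_2$ is only bounded and no continuity is available.

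I would try a strong-positivity comparison. Because $u_2\geq0$ everywhere and $u_2=f$ in $W$, for a test function $\varphi\geq0$ in $\wt H^s(\Omega)$ the nonlocal term gives
\[
\langle(-\Delta)^s u_2,\varphi\rangle\geq\langle(-\Delta)^s (u_2\chi_\Omega),\varphi\rangle+c\int_\Omega \varphi(x)\int_W \frac{f(y)}{|x-y|^{n+2s}}\,dy\,dx .
\]
So $v=u_2\chi_\Omega\in\wt H^s(\Omega)$ satisfies $((-\Delta)^s+q_2)v\geq g$, where $g(x)=c\int_W f(y)|x-y|^{-n-2s}\,dy$ is bounded below by a positive constant on $\Omega$, as $\overline W$ has positive distance from the bounded set $\Omega$ and $f\not\equiv0$.

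Now compare $v$ with $\epsilon\psi$, where $\psi\in\wt H^s(\Omega)$ is the solution of $((-\Delta)^s+q_2)\psi=1$. The weak maximum principle gives $\psi\geq0$. Choosing $\epsilon=\inf_\Omega g$ gives $v\geq\epsilon\psi$. It then remains to show $\psi>0$ a.e. I would get this by a local version of the same argument: on any ball $B\Subset\Omega$, the fact that $\psi\geq0$ is not identically zero, together with its positive right-hand side, yields a positive lower bound. An alternative is the weak Harnack inequality for nonlocal equations with bounded potential (Di Castro--Kuusi--Palatucci type), which gives $\operatorname{ess\,inf}_B u_2>0$ on every ball $B\Subset\Omega$ as soon as $u_2\geq0$ and $u_2\not\equiv0$; indeed its nonlocal tail sees $f\geq0$, $f\not\equiv0$. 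Either route gives $u_2>0$ a.e. in $\Omega$, hence $q_1=q_2$ a.e.
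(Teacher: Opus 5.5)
Your Steps 1 and 2 are correct, but Step 3 does not yet prove the statement it needs, namely $u_2>0$ a.e.\ in $\Omega$ for merely bounded $q_2$.

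\textbf{The gap in Step 3.} Comparing $v=u_2\chi_\Omega$ with $\epsilon\psi$, where $\psi=\mathcal G_{q_2}1$, only moves the problem. Proving $\psi>0$ a.e.\ is the same kind of statement as proving $v>0$ a.e.: both are nonnegative elements of $\wt H^s(\Omega)$ with a positive right-hand side. Your ``local version of the same argument'' is never made precise.

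The weak Harnack alternative is not immediate either. The Di Castro--Kuusi--Palatucci estimate concerns supersolutions of operators without a zero-order term. Here $(-\Delta)^su_2=-q_2u_2\le0$ in $\Omega$, so $u_2$ is a subsolution of $(-\Delta)^su=0$. You would need a version that accommodates $q_2$, and you do not supply one.

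\textbf{The fix.} Compare $v$ directly with a smooth bump instead of the torsion function. Given a compact $D\Subset\Omega$, take $0\le\eta\in C_c^\infty(\Omega)$ with $\eta\ge1$ on $D$. Since $(-\Delta)^s\eta+q_2\eta\in L^\infty(\Omega)$, there is $t>0$ with $t\bigl((-\Delta)^s\eta+q_2\eta\bigr)\le\inf_\Omega g$. Then $B_{q_2}(v-t\eta,\phi)\ge0$ for all $0\le\phi\in\wt H^s(\Omega)$, and your weak maximum principle gives $v\ge t$ on $D$. Exhausting $\Omega$ by compact sets yields $u_2>0$ a.e.; this is exactly Lemma~\ref{lem:positive-measure}.

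\textbf{A sign slip.} Your displayed inequality in Step 3 is reversed. Because $u_2-u_2\chi_\Omega=f$ and $(-\Delta)^sf=-g$ in $\Omega$, one has the identity $\langle(-\Delta)^s(u_2\chi_\Omega),\varphi\rangle=\langle(-\Delta)^su_2,\varphi\rangle+\int_\Omega g\varphi\,dx$. Your conclusion $((-\Delta)^s+q_2)v\ge g$ (in fact with equality) is correct, but it does not follow from the display as written.

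\textbf{Comparison with the paper.} With that repair, your proof is valid and takes a genuinely different route. The paper carries $\mathcal M_{q_1,\delta_{x_0}}(f)-\mathcal M_{q_2,\delta_{x_0}}(f)=-\int_\Omega F_{\delta_{x_0}}w\,dx$ one step further. Via Proposition~\ref{prop:comparison-identity} it rewrites this as $\int_\Omega(q_1-q_2)v_{q_2,f}v_{q_1,\delta_{x_0}}\,dx$. It therefore needs strict positivity of both $v_{q_2,f}$ and the interior solution $v_{q_1,\delta_{x_0}}$ generated by the exterior Dirac mass.

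You stop at $-\int_\Omega F_{\delta_{x_0}}w\,dx$ and pair $w\le0$, from the weak maximum principle, with the explicitly positive kernel. The two arguments are dual, since $-w=\mathcal G_{q_1}\bigl((q_1-q_2)u_2\bigr)$ while $v_{q_1,\delta_{x_0}}=\mathcal G_{q_1}F_{\delta_{x_0}}$. Your version:
\begin{enumerate}[(i)]
\item avoids the measure solution and the duality altogether;
\item needs strict positivity only of $u_2$;
\item yields the stronger intermediate fact $u_{q_1,f}=u_{q_2,f}$, i.e.\ $\Lambda_{q_1}f=\Lambda_{q_2}f$ on all of $\Omega_e$.
\end{enumerate}
In effect, the order assumption replaces the unique continuation step in the proof of Corollary~\ref{cor:finitely_many_points}, and the rest is identical.

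Both versions extend verbatim to any nonzero nonnegative $\mu$ in place of $\delta_{x_0}$. The paper's version has the advantage of an explicit monotonicity-type formula for the data difference.
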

	
	Nonnegative potentials satisfy \eqref{eq:nonresonance}. We prove Theorem~\ref{thm:single-input-point} using a comparison identity and positivity of the interior solutions for $f$ and $\delta_{x_0}$. This is related to the monotonicity relations in \cite{harrach2017nonlocal-monotonicity}; here the identity is for a point observation. In particular, if $q_0\geq0$ is known and $q\geq0$ satisfies either $q\geq q_0$ or $q\leq q_0$, equality of the measured values implies $q=q_0$.
	
	For potentials in a finite-dimensional space, one fixed nonnegative input also gives uniqueness from finitely many point observations. The points can be chosen independently of the potentials.
	
	\begin{corollary}\label{cor:finitely_many_points}
		Let $W_1,W_2\Subset\Omega_e$ be nonempty open sets with disjoint closures, and assume that $W_2$ is connected. Let $V\subset L^\infty(\Omega)$ be a real linear space of dimension $d\geq1$, and fix $0\not\equiv f\in C_c^\infty(W_1)$ with $f\geq0$. For almost every $(x_1,\ldots,x_{2d+1})\in W_2^{2d+1}$, the following holds for all $q_1,q_2\in V$ with $q_1,q_2\geq0$ almost everywhere in $\Omega$: if $(\Lambda_{q_1}f)(x_j)=(\Lambda_{q_2}f)(x_j)$ for $j=1,\ldots,2d+1$, then $q_1=q_2$ almost everywhere in $\Omega$.
	\end{corollary}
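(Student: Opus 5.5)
The plan is to turn the corollary into a dimension count for an analytic incidence set. The counting needs two ingredients. First, identifiability from the whole open set $W_2$: for distinct admissible $q_1,q_2\in V$, the function $x\mapsto(\Lambda_{q_1}f)(x)-(\Lambda_{q_2}f)(x)$ is a nonzero real-analytic function on $W_2$. Second, joint analyticity in the potential and the observation point. Connectedness of $W_2$ then makes each zero set a null analytic set, and a product of $2d+1$ such sets is small enough to survive a projection along a $2d$-dimensional family of potentials.

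\emph{Step 1 (representation and identifiability).} Write $u_j=u_{q_j,f}$ and $w=u_1-u_2\in\wt H^s(\Omega)$. For $x\in W_2$, the point $x$ lies away from $\supp f\subset W_1$ and from $\overline\Omega$. The singular-integral formula for $(-\Delta)^s$, used as in Lemma~\ref{lem:exterior-observation}, gives
\[
(\Lambda_{q_1}f-\Lambda_{q_2}f)(x)=-c_{n,s}\int_\Omega\frac{w(y)}{|x-y|^{n+2s}}\,dy=:(Tw)(x),
\]
which is real analytic on $W_2$. Suppose $Tw\equiv0$ on $W_2$. Then $w=0$ and $(-\Delta)^sw=0$ on the open set $W_2$, so unique continuation for the fractional Laplacian yields $w\equiv0$, i.e.\ $u_1=u_2=:u$. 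Subtracting the two equations gives $(q_1-q_2)u=0$ in $\Omega$. Because $f\geq0$, $f\not\equiv0$ and $q_j\geq0$, the interior solution satisfies $u>0$ a.e.\ in $\Omega$; this is the same positivity used for Theorem~\ref{thm:single-input-point}. Hence $q_1=q_2$. So for $q_1\neq q_2$ the function $Tw$ is a nonzero analytic function on the connected set $W_2$, and its zero set $Z_{q_1,q_2}$ is an analytic set of dimension at most $n-1$.

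\emph{Step 2 (parametrization).} Identify $V\cong\R^d$. Take an open neighbourhood $U\subset V$ of the closed cone $V_+=\{q\in V:q\ge0\}$ on which \eqref{eq:nonresonance} still holds; such a $U$ exists by a Neumann-series perturbation, since $V$ is finite-dimensional. On $U$, the map $q\mapsto u_{q,f}-f\in\wt H^s(\Omega)$ is real analytic, because it is given by a resolvent of an operator depending linearly on $q$. To remove the trivial zeros on the diagonal, set $h=q_1-q_2$ and pass to $(q_2,r,\theta)$ with $h=r\theta$, $r\ge0$, $\theta\in S^{d-1}$. Define
\[
G(q_2,r,\theta,x)=r^{-1}\,T\bigl(u_{q_2+r\theta,f}-u_{q_2,f}\bigr)(x).
\]
Analyticity in $q$ shows that $G$ extends analytically to $r=0$, where it equals the directional derivative. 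Its parameter space $M$ has dimension $d+1+(d-1)=2d$, and the bad pairs with $q_1\neq q_2$ correspond to $r>0$ in $M$. I will not need the diagonal itself. The set $M\cap\{r>0,\ q_2,q_2+r\theta\in V_+\}$ is a countable union of compact pieces, which suffices for what follows.

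\emph{Step 3 (incidence set and projection).} Consider
\[
\mathcal Z=\bigl\{(p,x_1,\dots,x_{2d+1})\in M\times W_2^{2d+1}:\ G(p,x_j)=0,\ j=1,\dots,2d+1\bigr\},
\]
restricted to $p$ with $r>0$ and admissible potentials. This is a (semi)analytic set. By Step 1, each fiber over $p$ equals $Z_p^{2d+1}$ and has dimension at most $(2d+1)(n-1)$. Stratifying $\mathcal Z$ into analytic manifolds and applying the fiber-dimension bound gives
\[
\dim\mathcal Z\le 2d+(2d+1)(n-1)=(2d+1)n-1.
\]
The projection $\pi:\mathcal Z\to W_2^{2d+1}$ is locally Lipschitz, so $\pi(\mathcal Z)$ has Hausdorff dimension less than $(2d+1)n$ and is therefore Lebesgue-null. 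Every tuple outside $\pi(\mathcal Z)$ has the required property. The tuple is chosen before $q_1,q_2$, so the points are independent of the potentials.

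I expect the main obstacle to be making Step 3 rigorous. This requires joint real analyticity of $G$ in $(p,x)$ up to the boundary of the admissible cone, and a clean fiber-dimension inequality for semianalytic sets. The first thing I would try is to work on compact exhaustions of the open parameter region and use a standard stratification theorem, such as Łojasiewicz's. A fallback is a direct covering argument on each stratum, with the sublevel estimates taken from the implicit function theorem at points where $\nabla_xG\ne0$. The positivity $u>0$ a.e.\ in Step 1 also needs a precise reference or a short weak maximum-principle argument, of the same kind as the one behind Theorem~\ref{thm:single-input-point}.
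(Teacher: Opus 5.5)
Your proposal is correct. It shares Step~1 and the joint analyticity of $(q,x)\mapsto(\Lambda_qf)(x)$ with the paper, but handles the passage to $2d+1$ points differently.

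\textbf{The paper's route.} The paper simply invokes \cite[Theorem~2.6]{alberti2026analytic} on the open set $A=Q\cap V$ of potentials in $V$ with coercive $B_q$. That theorem packages the statement that a $d$-dimensional analytic family which is injective from the whole connected set $W_2$ is injective from $2d+1$ generic points.

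\textbf{Your route.} You reprove that theorem by an incidence-set dimension count. The off-diagonal pairs form a $2d$-dimensional family. Each point imposes at least one condition, because the zero set of a nonzero analytic function on the connected $W_2$ has dimension at most $n-1$. Since $2d+(2d+1)(n-1)<(2d+1)n$, the projection is null. This is self-contained and makes the roles of connectedness and of the number $2d+1$ explicit. The price is \L{}ojasiewicz stratification and a fiber-dimension inequality, which is exactly what the citation hides.

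\textbf{Two simplifications for Step~3.}
\begin{enumerate}[\rm(1)]
\item Restricting to $q_1,q_2\in V_+$ can destroy semianalyticity, so your claim that $\mathcal Z$ is (semi)analytic is not justified as stated. The cone $V_+$ is the dual cone of the essential range of a basis of $V$, and can be a rather arbitrary closed convex cone. However, Lemma~\ref{lem:positive-measure} only needs coercivity of $B_q$. So your Step~1 argument gives identifiability on the open set $Q\cap V\supset V_+$, exactly as in the paper. Working there, $\mathcal Z$ is an honest analytic subset of an open set, and the fiber-dimension bound applies directly.
\item The blow-up $h=r\theta$ and the analytic extension to $r=0$ are unnecessary. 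You only count over $r>0$, where $(q_2,r,\theta)\mapsto(q_2+r\theta,q_2)$ is an analytic diffeomorphism onto the off-diagonal. The off-diagonal is already an open $2d$-dimensional set.
\end{enumerate}
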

	
	To prove Theorem~\ref{thm:uniqueness}, we first associate with $\mu$ a smooth source in $\Omega$, using the separation of $\supp\mu$ from $\overline\Omega$. We solve for $v_{q,\mu}\in\wt H^s(\Omega)$ and regard $u_{q,\mu}=v_{q,\mu}+\mu$ as a distribution on $\R^n$. If $\mu$ has an atom, this distribution need not belong to $H^s(\R^n)$.
	
	We relate the Green and Poisson operators by a transposition formula. For $q=0$ and $L^2$ exterior data, a very weak formulation and the corresponding adjoint relation appear in \cite[Definition~3.3 and Lemma~4.2]{AKW19}. We prove the formula for bounded potentials and finite measures. Symmetry yields $\mathcal M_{q,\mu}(f)=\int_{W_1}f\,(-\Delta)^su_{q,\mu}\,dx$; see Proposition~\ref{prop:reciprocity}. Equality of the data and the unique continuation show that the two interior solutions agree. Their equations then give $(q_1-q_2)v=0$ in $\Omega$.
	
	For continuous potentials, the unique continuation completes the argument. For bounded potentials, we prove that $v_{q,\mu}$ cannot vanish on a set of positive measure if $\mu\neq0$. The extension of $u_{q,\mu}$ has finite weighted energy locally above $\Omega$ and satisfies a Robin condition there. This lets us apply the local estimates of \cite[Section~5]{GRSU20}, followed by a rescaling argument.
	
	\para{Organization of the paper} Section~\ref{sec:forward} treats the source problem induced by an exterior measure and proves the duality, reciprocity, positivity, and comparison results. Section~\ref{sec:ucp} proves the unique continuation statements. The proofs of Theorems~\ref{thm:uniqueness} and~\ref{thm:single-input-point} and Corollary~\ref{cor:finitely_many_points} are given in Section~\ref{sec:proof}.
	
	\section{The forward problem}\label{sec:forward}
	
	\subsection{Function spaces and the energy form}
	For a smooth rapidly decreasing function, the fractional Laplacian is given by
	\begin{equation}\label{eq:frac-laplacian}
		(-\Delta)^su(x)=C_{n,s}\,\mathrm{P.V.}\int_{\R^n}
		\frac{u(x)-u(z)}{|x-z|^{n+2s}}\,dz,
	\end{equation}
	where $C_{n,s}=4^s\Gamma(n/2+s)/(\pi^{n/2}|\Gamma(-s)|)$. We use the Fourier definition of $H^t(\R^n)$ for $t\in\R$. If $D$ is open, $H^t(D)=\{v|_D:v\in H^t(\R^n)\}$ has the quotient norm, and $\wt H^t(D)$ is the closure of $C_c^\infty(D)$ in $H^t(\R^n)$. We write $H^{-s}(D)=(\wt H^s(D))^*$. For $0<s<1$, the usual $H^s(\R^n)$ norm is equivalent to the norm given by the $L^2$ norm and the Gagliardo seminorm; see \cite{DNPV12,GSU20}.
	
	Put $X=\wt H^s(\Omega)$, regarding its elements as functions on $\R^n$ that vanish in $\Omega_e$. We set
	\begin{equation}\label{eq:bilinear}
		B_q(v,w)=\int_{\R^n}(-\Delta)^{s/2}v\,(-\Delta)^{s/2}w\,dx
		+\int_\Omega qvw\,dx, \quad \text{for }v,w\in H^s(\R^n).
	\end{equation}
	The form $B_q$ is continuous and symmetric, and the fractional Poincar\'e inequality makes $B_0$ coercive on $X$. Since $X$ is compactly embedded in $L^2(\Omega)$, multiplication by $q$ gives a compact perturbation from $X$ to $X^*$. The Fredholm alternative and \eqref{eq:nonresonance} imply that, for every $h\in X^*$, there is a unique $z\in X$ with $B_q(z,\phi)=\langle h,\phi\rangle$ for all $\phi\in X$, and $\|z\|_{H^s(\R^n)}\leq C\|h\|_{X^*}$; see also \cite[Lemma~2.3]{GSU20}.
	
	We denote this solution by $\mathcal G_qh$. Thus, $\mathcal G_q:X^*\to X$ is the Green operator with zero exterior value. Moreover, symmetry gives
	\begin{equation}\label{eq:sym_of_Green_kernel}
		\int_\Omega(\mathcal G_qh)k\,dx=\int_\Omega h(\mathcal G_qk)\,dx, \quad \text{for }h,k\in L^2(\Omega).
	\end{equation} 
	We use the zero extension when writing $\mathcal G_qh$ outside $\Omega$.
	
	\subsection{The source induced by an exterior measure}\label{sec:measure-forward}
	Let $\mu$ be a real finite signed Borel measure with compact support $K\Subset\Omega_e$, and write $\|\mu\|_{\mathcal M}=|\mu|(\R^n)$. We regard $\mu$ as a distribution on $\R^n$. For $x\in\Omega$, define
	\begin{equation}\label{eq:forcing}
		F_\mu(x)=C_{n,s}\int_K|x-z|^{-n-2s}\,d\mu(z).
	\end{equation}
	The distance between $K$ and $\overline\Omega$ is positive. Differentiating under the integral shows that $F_\mu$ is smooth near $\overline\Omega$ and $\sup_{\overline\Omega}|\partial^\alpha F_\mu|\leq C_\alpha\|\mu\|_{\mathcal M}$ for every multi-index $\alpha$. In particular, $F_\mu\in L^2(\Omega)$.
	
	For $\phi\in C_c^\infty(\Omega)$, formula \eqref{eq:frac-laplacian} and Fubini's theorem give $\int_K(-\Delta)^s\phi\,d\mu=-\int_\Omega F_\mu\phi\,dx$. Hence, $(-\Delta)^s\mu=-F_\mu$ in $\Omega$.
	
	\begin{definition}\label{def:measure-solution}
		An interior solution associated with $\mu$ is a function $v\in X$ satisfying
		\begin{equation}\label{eq:source}
			B_q(v,\phi)=\int_\Omega F_\mu\phi\,dx
			\quad\text{for every }\phi\in X.
		\end{equation}
		When \eqref{eq:nonresonance} holds, we denote this unique solution by $v_{q,\mu}$.
	\end{definition}
	
	The equation for $v_{q,\mu}$ is 
	\begin{equation}
		\begin{cases}
			((-\Delta)^s+q)v_{q,\mu}=F_\mu &\text{ in }\Omega, \\
			v_{q,\mu}=0 &\text{ in }\Omega_e.
		\end{cases}
	\end{equation} 
	We also set $u_{q,\mu}=v_{q,\mu}+\mu$ as a distribution on $\R^n$, identifying $v_{q,\mu}$ with $v_{q,\mu}\,dx$ in this sum. For $\phi\in C_c^\infty(\R^n)$,
	\begin{equation}\label{eq:distribution}
		\langle(-\Delta)^su_{q,\mu},\phi\rangle
		=\int_\Omega v_{q,\mu}(x)(-\Delta)^s\phi(x)\,dx
		+\int_K(-\Delta)^s\phi(z)\,d\mu(z).
	\end{equation}
	Both integrals are finite because $v_{q,\mu}\in L^1(\Omega)$ and $(-\Delta)^s\phi$ is bounded. By \eqref{eq:source} and $(-\Delta)^s\mu=-F_\mu$ in $\Omega$, we have $\langle(-\Delta)^su_{q,\mu},\phi\rangle+\int_\Omega qv_{q,\mu}\phi\,dx=0$ for $\phi\in C_c^\infty(\Omega)$. In this sense $((-\Delta)^s+q)u_{q,\mu}=0$ in $\Omega$, where $qu_{q,\mu}$ means $qv_{q,\mu}$ there. For compactly supported $f\in L^1(\Omega_e)$, we may take $\mu=f\,dx$.
	
	\begin{lemma}[Well-posedness]\label{lem:measure-wellposed}
		Assume \eqref{eq:nonresonance}. For every finite signed Borel measure $\mu$ supported in $K\Subset\Omega_e$, there is a unique interior solution associated with $\mu$. It is given by $v_{q,\mu}=\mathcal G_qF_\mu$ and satisfies $\|v_{q,\mu}\|_{H^s(\R^n)}\leq C\|\mu\|_{\mathcal M}$, where $C$ depends on $\Omega,K,n,s,q$. If $\mu=f\,dx$ with $f\in C_c^\infty(\Omega_e)$, then $u_{q,\mu}=v_{q,\mu}+f$ agrees with the usual energy solution with exterior value $f$.
	\end{lemma}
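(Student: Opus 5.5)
Existence and uniqueness follow from the abstract solvability already recorded for $B_q$. Since $F_\mu$ is smooth near $\overline\Omega$ with $\sup_{\overline\Omega}|F_\mu|\le C_0\|\mu\|_{\mathcal M}$, where $C_0$ depends only on $n,s$ and $\operatorname{dist}(K,\overline\Omega)$, the functional $h(\phi)=\int_\Omega F_\mu\phi\,dx$ belongs to $X^*$. Indeed, Cauchy--Schwarz gives
\[
|h(\phi)|\le \|F_\mu\|_{L^2(\Omega)}\|\phi\|_{L^2(\Omega)}\le C_0|\Omega|^{1/2}\|\mu\|_{\mathcal M}\|\phi\|_{H^s(\R^n)}.
\]
By the Fredholm alternative together with \eqref{eq:nonresonance}, as stated above, there is exactly one $z\in X$ with $B_q(z,\phi)=\langle h,\phi\rangle$ for all $\phi\in X$. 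This $z$ is $\mathcal G_qF_\mu$. It satisfies \eqref{eq:source} by definition, and any other interior solution differs from it by an element $w\in X$ with $B_q(w,\cdot)=0$ on $X$; that is, $w$ is a weak solution of $((-\Delta)^s+q)w=0$ in $\Omega$, so $w=0$ by \eqref{eq:nonresonance}. The estimate is then $\|v_{q,\mu}\|_{H^s}\le C\|h\|_{X^*}\le C C_0|\Omega|^{1/2}\|\mu\|_{\mathcal M}$, with the dependence on $\Omega,K,n,s,q$ as claimed.

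For the compatibility statement, take $f\in C_c^\infty(\Omega_e)$ and let $u_f\in H^s(\R^n)$ be the energy solution. Then $u_f-f\in X$ and
\[
B_q(u_f,\phi)=0 \quad\text{for } \phi\in X.
\]
I would show that $w:=u_f-f$ solves \eqref{eq:source} with $\mu=f\,dx$; uniqueness then gives $w=v_{q,\mu}$, hence $u_f=v_{q,\mu}+f=u_{q,\mu}$. Since $qf=0$ in $\Omega$, we have $B_q(w,\phi)=-B_q(f,\phi)=-\int_{\R^n}(-\Delta)^{s/2}f\,(-\Delta)^{s/2}\phi\,dx$. For $\phi\in C_c^\infty(\Omega)$, Plancherel rewrites this as $-\int_{\R^n}\phi\,(-\Delta)^sf\,dx$. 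Because $\supp f\cap\supp\phi=\emptyset$, formula \eqref{eq:frac-laplacian} has no singularity and gives $(-\Delta)^sf(x)=-C_{n,s}\int f(z)|x-z|^{-n-2s}dz=-F_\mu(x)$ for $x\in\Omega$. Hence $B_q(w,\phi)=\int_\Omega F_\mu\phi\,dx$ for all $\phi\in C_c^\infty(\Omega)$. Both sides are continuous in $\phi$ for the $H^s$ norm, and $C_c^\infty(\Omega)$ is dense in $X$, so the identity extends to all $\phi\in X$.

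The main point needing care is the sign and the pointwise evaluation of $(-\Delta)^sf$ on $\Omega$, which amounts to the identity $(-\Delta)^s\mu=-F_\mu$ in $\Omega$ already derived above for general $\mu$. For smooth $f$, the Fourier and singular-integral definitions agree, and the disjointness of supports removes the principal value. Everything else follows directly from the Fredholm framework.
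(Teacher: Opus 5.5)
Your proposal is correct and follows the paper's proof. The $X^*$ bound on $F_\mu$ combined with $\mathcal G_q$ gives existence, uniqueness and the estimate, and the compatibility rests on the same identity $(-\Delta)^sf=-F_\mu$ in $\Omega$; the only cosmetic difference is that you show $u_f-f$ solves \eqref{eq:source} and invoke uniqueness of interior solutions, whereas the paper shows $v_{q,\mu}+f$ solves the energy problem and invokes uniqueness of the energy solution.
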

	
	\begin{proof}
		The bound on $F_\mu$ gives $\|F_\mu\|_{X^*}\leq C\|\mu\|_{\mathcal M}$. Applying $\mathcal G_q$ proves existence, uniqueness, and the stated estimate. For $\mu=f\,dx$ as in the statement, $v_{q,\mu}+f\in H^s(\R^n)$ and $B_q(v_{q,\mu}+f,\phi)=0$ for every $\phi\in X$, since $(-\Delta)^sf=-F_\mu$ in $\Omega$. Uniqueness of the energy solution gives the last assertion.
	\end{proof}
	
	For $z\in\Omega_e$, taking $\mu=\delta_z$ gives $v_{q,\delta_z}=\mathcal G_q(C_{n,s}|\,\cdot-z|^{-n-2s})$. The source is smooth near $\overline\Omega$, while $u_{q,\delta_z}=v_{q,\delta_z}+\delta_z$ need not belong to $H^s(\R^n)$. The estimate of Lemma~\ref{lem:measure-wellposed} applies to its interior part.
	
	\begin{lemma}[Approximation]\label{lem:approximation}
		Suppose that $\mu_k\rightharpoonup^*\mu$ as finite measures and that all these measures are supported in a fixed compact set $K\Subset\Omega_e$. Then $v_{q,\mu_k}\to v_{q,\mu}$ in $H^s(\R^n)$ and $u_{q,\mu_k}\to u_{q,\mu}$ in $\mathscr D'(\R^n)$.
	\end{lemma}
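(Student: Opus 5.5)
The plan is to reduce the statement to continuity of $\mathcal G_q$ together with uniform convergence of the forcing terms $F_{\mu_k}\to F_\mu$ on $\overline\Omega$. By Lemma~\ref{lem:measure-wellposed}, $v_{q,\mu_k}=\mathcal G_qF_{\mu_k}$ and $v_{q,\mu}=\mathcal G_qF_\mu$. Since $\mathcal G_q:X^*\to X$ is bounded and $\|h\|_{X^*}\leq C\|h\|_{L^2(\Omega)}$, it suffices to prove $F_{\mu_k}\to F_\mu$ in $L^2(\Omega)$, and we aim to get this uniformly on $\overline\Omega$.

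Let $d=\operatorname{dist}(K,\overline\Omega)>0$ and $K_d=\{y:\operatorname{dist}(y,K)<d/2\}$. Choose a cutoff $\chi\in C_c^\infty(\R^n\setminus\overline\Omega)$ with $\chi=1$ on a neighbourhood of $K$. For $x\in\overline\Omega$, put $g_x(z)=C_{n,s}\chi(z)|x-z|^{-n-2s}$. Then $g_x\in C_c(\R^n)$, and $F_\mu(x)=\int g_x\,d\mu$ because $\mu$ lives on $K$. Weak-$*$ convergence therefore gives $F_{\mu_k}(x)\to F_\mu(x)$ for each $x\in\overline\Omega$.

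To upgrade pointwise to uniform convergence, we use that $\sup_k\|\mu_k\|_{\mathcal M}<\infty$ by the uniform boundedness principle. This bound, together with $\sup_{\overline\Omega}|\nabla F_{\mu_k}|\leq C_1\|\mu_k\|_{\mathcal M}$ (the derivative bound stated after \eqref{eq:forcing}), makes $\{F_{\mu_k}\}$ equicontinuous on the compact set $\overline\Omega$. Pointwise convergence plus equicontinuity yields uniform convergence by Arzel\`a--Ascoli. Equivalently, the map $x\mapsto g_x$ is continuous from $\overline\Omega$ into $C_c(\R^n)$ with the sup norm, so its image is compact, and weak-$*$ convergence with bounded norms is uniform on compact sets. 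In either form, $\|F_{\mu_k}-F_\mu\|_{L^2(\Omega)}\to0$, and hence $v_{q,\mu_k}\to v_{q,\mu}$ in $H^s(\R^n)$.

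For the distributional statement, take $\phi\in C_c^\infty(\R^n)$. Then
\[
\langle u_{q,\mu_k},\phi\rangle=\int_\Omega v_{q,\mu_k}\phi\,dx+\int\phi\,d\mu_k .
\]
The first term converges because $v_{q,\mu_k}\to v_{q,\mu}$ in $L^2$. The second converges by weak-$*$ convergence, since $\phi$ is continuous and compactly supported. This gives $u_{q,\mu_k}\to u_{q,\mu}$ in $\mathscr D'(\R^n)$, and applying \eqref{eq:distribution} also shows $(-\Delta)^su_{q,\mu_k}\to(-\Delta)^su_{q,\mu}$ in $\mathscr D'(\R^n)$.

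The only real care point is interpreting weak-$*$ convergence. If it is meant against $C_0$ or $C_c$ test functions, the norm bound comes from Banach--Steinhaus. If it is meant against $C_b$, the bound is immediate. In both cases the integrand $g_x$ must be truncated to a function in $C_c$ before testing, which is what the cutoff $\chi$ does.
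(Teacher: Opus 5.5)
Your proposal is correct and follows essentially the same route as the paper: a uniform total-variation bound, pointwise convergence of $F_{\mu_k}$ on $\overline\Omega$, equicontinuity from the uniform derivative bounds to upgrade this to uniform convergence, continuity of $\mathcal G_q$, and then adding $\mu_k\rightharpoonup^*\mu$ for the distributional limit. Your cutoff $\chi$, which turns the kernel into a legitimate $C_c$ test function for the weak-$*$ convergence, is a detail the paper leaves implicit.
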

	\begin{proof}
		Weak-star convergence gives $\sup_k\|\mu_k\|_{\mathcal M}<\infty$. For every $x\in\overline\Omega$, the kernel in \eqref{eq:forcing} is continuous on $K$, so $F_{\mu_k}(x)\to F_\mu(x)$. The derivative bounds are uniform, and equicontinuity on the compact set $\overline\Omega$ gives uniform convergence. Thus, $F_{\mu_k}\to F_\mu$ in $X^*$, and continuity of $\mathcal G_q$ gives convergence in $H^s(\R^n)$. Adding $\mu_k\rightharpoonup^*\mu$ proves the distributional convergence.
	\end{proof}
	
	In particular, let $K\Subset W\Subset\Omega_e$ and set $f_k=\rho_{\eps_k}*\mu$, where $\rho$ is a standard nonnegative mollifier and $\eps_k\to0$. For large $k$, $f_k\in C_c^\infty(W)$ and their supports lie in one compact subset of $W$. Since $f_k\,dx\rightharpoonup^*\mu$ and $\|f_k\|_{L^1}\leq\|\mu\|_{\mathcal M}$, Lemma~\ref{lem:approximation} applies. This is convergence of measures in the weak-star sense; a mollified Dirac measure, for example, does not converge to the Dirac measure in total variation.
	
	\subsection{The Green and Poisson operators}\label{sec:duality}
	Fix a bounded open set $W\Subset\Omega_e$. For $f\in L^2(W)$, extended by zero outside $W$, set $F_f(x)=C_{n,s}\int_W f(z)|x-z|^{-n-2s}\,dz$ and $\mathcal P_{q,W}f=(\mathcal G_qF_f)\big|_\Omega$. The Poisson operator $\mathcal P_{q,W}:L^2(W)\to L^2(\Omega)$ is bounded. Its zero-extended interior part is in $X$, but the full solution $\mathcal G_qF_f+f$ need not be in $H^s(\R^n)$ for general $L^2$ data.
	
	For $h\in L^2(\Omega)$, the function $z=\mathcal G_qh$ vanishes in $\Omega_e$, then its exterior fractional Laplacian is
	\begin{equation}\label{eq:green-exterior}
		(-\Delta)^s(\mathcal G_qh)(x)
		=-C_{n,s}\int_\Omega\frac{(\mathcal G_qh)(y)}{|x-y|^{n+2s}}\,dy,
		\quad x\in\Omega_e.
	\end{equation}
	This function is smooth in $\Omega_e$, and on every compact subset each derivative is bounded by $C\|h\|_{L^2(\Omega)}$. Since $\mathcal G_qh$ has zero exterior value, \eqref{eq:green-exterior} is also its nonlocal normal derivative in the sense of \cite[equation~(2.4)]{AKW19}.
	
	For $q=0$ and $L^2$ exterior data, the adjoint relation and a very weak formulation were established in \cite[Lemma~4.2 and Definition~3.3]{AKW19}. Since $W$ is separated from $\overline\Omega$, the function in \eqref{eq:green-exterior} can also be integrated against a finite measure supported in $W$. We give the argument for bounded $q$ and for the solutions in Definition~\ref{def:measure-solution}.
	
	\begin{proposition}[Duality]\label{prop:duality}
		Assume \eqref{eq:nonresonance}. The $L^2$ adjoint of $\mathcal P_{q,W}$ satisfies
		\begin{equation}\label{eq:poisson-adjoint}
			\mathcal P_{q,W}^*h=-(-\Delta)^s(\mathcal G_qh)|_W.
		\end{equation}
		If $\mu$ is a finite signed Borel measure supported in a compact subset of $W$, then its interior solution is characterized by
		\begin{equation}\label{eq:transposition}
			\int_\Omega v_{q,\mu}h\,dx
			=-\int_W(-\Delta)^s(\mathcal G_qh)\,d\mu
			\quad\text{for every }h\in L^2(\Omega).
		\end{equation}
		More precisely, a function $v\in L^2(\Omega)$ satisfies \eqref{eq:transposition} if and only if it is the restriction of $\mathcal G_qF_\mu$ to $\Omega$.
	\end{proposition}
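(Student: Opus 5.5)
The plan is to reduce everything to Fubini's theorem, the symmetry \eqref{eq:sym_of_Green_kernel} of $\mathcal G_q$, and the explicit formula \eqref{eq:green-exterior} for the exterior fractional Laplacian of a function with zero exterior value. The basic computation is a double-integral identity. For $h\in L^2(\Omega)$ and a finite signed measure $\mu$ supported in a compact $K\subset W$, start from \eqref{eq:sym_of_Green_kernel} with $k=F_\mu\in L^2(\Omega)$, then insert \eqref{eq:forcing} and swap the order of integration:
\begin{equation}
\int_\Omega(\mathcal G_qF_\mu)h\,dx=\int_\Omega F_\mu\,\mathcal G_qh\,dx
=\int_K C_{n,s}\int_\Omega\frac{(\mathcal G_qh)(y)}{|y-z|^{n+2s}}\,dy\,d\mu(z)
=-\int_W(-\Delta)^s(\mathcal G_qh)\,d\mu .
\end{equation}
The last equality is \eqref{eq:green-exterior}. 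Fubini is justified because $|y-z|^{-n-2s}\le \operatorname{dist}(K,\overline\Omega)^{-n-2s}$ on $\Omega\times K$, because $\mathcal G_qh\in L^2(\Omega)\subset L^1(\Omega)$ as $\Omega$ is bounded, and because $|\mu|$ is finite. Since $v_{q,\mu}=\mathcal G_qF_\mu$ by Lemma~\ref{lem:measure-wellposed}, this proves \eqref{eq:transposition}.

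The adjoint formula \eqref{eq:poisson-adjoint} is the special case $\mu=f\,dx$ with $f\in L^2(W)$, but $f$ need not have compact support in $W$. I therefore redo the computation directly with $F_f$. The bound $|x-z|^{-n-2s}\le \operatorname{dist}(W,\overline\Omega)^{-n-2s}$ still holds since $W\Subset\Omega_e$, and $f\in L^1(W)$ since $W$ is bounded. This gives
\begin{equation}
\int_\Omega(\mathcal P_{q,W}f)h\,dx=-\int_W f\,(-\Delta)^s(\mathcal G_qh)\,dz .
\end{equation}
The function $(-\Delta)^s(\mathcal G_qh)$ is bounded on $\overline W$, by the derivative bounds stated after \eqref{eq:green-exterior} applied on the compact set $\overline W\subset\Omega_e$. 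Hence it lies in $L^2(W)$, and reading off the identity yields $\mathcal P_{q,W}^*h=-(-\Delta)^s(\mathcal G_qh)|_W$.

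For the characterization, suppose $v\in L^2(\Omega)$ satisfies \eqref{eq:transposition}. The computation above shows that $\mathcal G_qF_\mu|_\Omega$ satisfies it as well. Subtracting gives $\int_\Omega(v-\mathcal G_qF_\mu)h\,dx=0$ for all $h\in L^2(\Omega)$. Taking $h=v-\mathcal G_qF_\mu$ shows $v=\mathcal G_qF_\mu|_\Omega$. The converse is exactly the computation above.

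The main point needing care is the validity of \eqref{eq:green-exterior} for $z=\mathcal G_qh$, which is only in $\wt H^s(\Omega)$ and not smooth. If this must be checked, I would pair $(-\Delta)^s z$ with $\psi\in C_c^\infty(\Omega_e)$. By the Fourier definition this equals $\int z\,(-\Delta)^s\psi$. Here $(-\Delta)^s\psi(y)=-C_{n,s}\int\psi(x)|x-y|^{-n-2s}dx$ for $y\in\Omega$, since $\psi(y)=0$ and the integrand has no singularity. Fubini then identifies the distribution $(-\Delta)^s z$ on $\Omega_e$ with the smooth function in \eqref{eq:green-exterior}. Everything else in the argument is an elementary Fubini computation.
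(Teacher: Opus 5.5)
Your argument is correct and is essentially the paper's proof: symmetry of $\mathcal G_q$ with $k=F_\mu$, Fubini against the exterior kernel formula for $(-\Delta)^s(\mathcal G_qh)$, and uniqueness from the $L^2(\Omega)$ pairing. The only difference is minor. You obtain the adjoint formula for all $f\in L^2(W)$ directly, using $\operatorname{dist}(W,\overline\Omega)>0$, whereas the paper first proves it for $f\in C_c^\infty(W)$ as the case $\mu=f\,dx$ and then extends it by density and boundedness of $\mathcal P_{q,W}$; both routes are valid.
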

	\begin{proof}
		For $h\in L^2(\Omega)$, the symmetry \eqref{eq:sym_of_Green_kernel} of $\mathcal G_q$ and Fubini's theorem yield
		\[
		\int_\Omega(\mathcal G_qF_\mu)h\,dx=\int_\Omega F_\mu\mathcal G_qh\,dx=-\int_W(-\Delta)^s(\mathcal G_qh)\,d\mu.
		\]
		The last integral is well-defined by the bounds following \eqref{eq:green-exterior}, and it defines a bounded functional on $L^2(\Omega)$. Thus, \eqref{eq:transposition} has the unique solution $\mathcal G_qF_\mu\big|_\Omega$. Its zero extension belongs to $X$ and satisfies \eqref{eq:source}. Taking $\mu=f\,dx$ with $f\in C_c^\infty(W)$ gives \eqref{eq:poisson-adjoint} for smooth $f$. Since $C_c^\infty(W)$ is dense in $L^2(W)$ and $\mathcal P_{q,W}$ is bounded, the identity holds for every $f\in L^2(W)$.
	\end{proof}
	
	For $z\in W$, \eqref{eq:transposition} gives $\int_\Omega v_{q,\delta_z}h\,dx=-(-\Delta)^s(\mathcal G_qh)(z)$. This point evaluation is bounded in $h\in L^2(\Omega)$ since $z$ is separated from $\overline\Omega$.
	
	\subsection{The measured data and reciprocity}\label{sec:reciprocity}
	We now justify \eqref{eq:measurement} and prove reciprocity. Let $u_{q,\mu}=v_{q,\mu}+\mu$ with $K=\supp\mu\Subset\Omega_e$. On $\R^n\setminus(\overline\Omega\cup K)$, the fractional Laplacian is represented by the smooth function
	\begin{equation}\label{eq:measure-exterior}
		(-\Delta)^su_{q,\mu}(x)
		=-C_{n,s}\int_\Omega\frac{v_{q,\mu}(y)}{|x-y|^{n+2s}}\,dy
		-C_{n,s}\int_K\frac{d\mu(y)}{|x-y|^{n+2s}}.
	\end{equation}
	To obtain this formula, test \eqref{eq:distribution} away from $\overline\Omega\cup K$ and apply Fubini's theorem. The positive distance from the supports justifies differentiating under both integrals on compact subsets.
	
	\begin{lemma}\label{lem:exterior-observation}
		Let $W_1,W_2\Subset\Omega_e$ with $\overline{W_1}\cap \overline{W_2}=\emptyset$. For $f\in\wt H^s(W_1)$, the distribution $\Lambda_qf\big|_{W_2}$ is smooth and all its derivatives are bounded on $W_2$. For every multi-index $\alpha$, there is a constant $C_\alpha$ such that $\|\partial^\alpha\Lambda_qf\|_{L^\infty(W_2)}\leq C_\alpha\|f\|_{H^s(\R^n)}$. Consequently, \eqref{eq:measurement} defines a bounded linear functional of $f$ for every finite measure supported in $W_2$.
	\end{lemma}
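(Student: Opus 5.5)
We split the solution $u_{q,f}$ into its exterior value and an interior correction, and treat each piece on $W_2$ separately. For $f\in\wt H^s(W_1)$, write $u_{q,f}=f+v$ with $v\in X=\wt H^s(\Omega)$. Since $(-\Delta)^sf=-F_f$ in $\Omega$, where $F_f(x)=C_{n,s}\int_{W_1}f(z)|x-z|^{-n-2s}\,dz$, the correction is $v=\mathcal G_qF_f$. This is the content of Lemma~\ref{lem:measure-wellposed} with $\mu=f\,dx$, extended from $C_c^\infty(W_1)$ to $\wt H^s(W_1)$ by density and continuity. Because $\operatorname{dist}(W_1,\overline\Omega)>0$, the bounds on $F_\mu$ give $\|F_f\|_{L^2(\Omega)}\leq C\|f\|_{L^1(W_1)}\leq C\|f\|_{H^s(\R^n)}$, and hence $\|v\|_{L^2(\Omega)}\leq C\|f\|_{H^s(\R^n)}$.

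Next we represent $\Lambda_qf=(-\Delta)^su_{q,f}$ on $W_2$. Take a test function $\phi\in C_c^\infty(W_2)$. The supports of $v$ (in $\overline\Omega$) and of $f$ (in $\overline{W_1}$) are at positive distance from $\supp\phi$, so by \eqref{eq:frac-laplacian} and Fubini's theorem, exactly as in \eqref{eq:measure-exterior}, we have for $x\in W_2$
\[
\Lambda_qf(x)=-C_{n,s}\int_\Omega\frac{v(y)}{|x-y|^{n+2s}}\,dy-C_{n,s}\int_{W_1}\frac{f(y)}{|x-y|^{n+2s}}\,dy .
\]
The justification goes as follows. The pairing $\langle(-\Delta)^su_{q,f},\phi\rangle=\int u_{q,f}\,(-\Delta)^s\phi$ holds with $u_{q,f}\in H^s\subset L^2$. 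Off $\supp\phi$, the function $(-\Delta)^s\phi(y)$ equals $-C_{n,s}\int\phi(x)|x-y|^{-n-2s}\,dx$, and the integrand is absolutely integrable, so we may swap the order of integration.

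We then bound the kernel. Set $d=\min\{\operatorname{dist}(W_2,\overline\Omega),\operatorname{dist}(W_2,\overline{W_1})\}>0$. Here $\operatorname{dist}(W_2,\overline\Omega)>0$ because $W_2\Subset\Omega_e$, and $\operatorname{dist}(W_2,\overline{W_1})>0$ because the closures are disjoint and bounded. For $x\in W_2$ and $y$ in either domain of integration, $|\partial_x^\alpha|x-y|^{-n-2s}|\leq C_\alpha d^{-n-2s-|\alpha|}$. Differentiating under the integral sign is then legitimate by dominated convergence. Cauchy--Schwarz on the bounded sets $\Omega$ and $W_1$ gives
\[
\|\partial^\alpha\Lambda_qf\|_{L^\infty(W_2)}\leq C_\alpha\bigl(\|v\|_{L^2(\Omega)}+\|f\|_{L^2(W_1)}\bigr)\leq C_\alpha\|f\|_{H^s(\R^n)} .
\]

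Finally, for a finite measure $\mu$ supported in $W_2$, the previous bound yields $|\int\Lambda_qf\,d\mu|\leq\|\mu\|_{\mathcal M}C_0\|f\|_{H^s}$, which is the boundedness of \eqref{eq:measurement}. The only delicate step is the distributional identification of $(-\Delta)^su_{q,f}$ on $W_2$ with the kernel formula. Everything else is a routine estimate thanks to the positive separation of $W_2$ from both $\overline\Omega$ and $\overline{W_1}$.
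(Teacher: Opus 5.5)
Your proof is correct and is essentially the paper's argument. You represent $\Lambda_qf$ on $W_2$ by the kernel integral of $u_{q,f}$ over $\Omega\cup W_1$; your split into the $v$-part and the $f$-part is the same formula. You then differentiate under the integral using the positive separation of $\overline{W_2}$ from $\overline\Omega\cup\overline{W_1}$, and finish with Cauchy--Schwarz.

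The only differences are cosmetic. You bound the interior part through $v=\mathcal G_qF_f$ and $\|F_f\|_{L^2(\Omega)}\le C\|f\|_{L^1(W_1)}$, where the paper simply quotes the energy estimate $\|u_{q,f}\|_{H^s(\R^n)}\le C\|f\|_{H^s(\R^n)}$. You also write out the Fubini justification of the kernel formula, which the paper leaves implicit.
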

	\begin{proof}
		The energy solution $u_{q,f}$ is supported in $\overline\Omega\cup\overline{W_1}$. For $x\in W_2$ we have
		\begin{equation}\label{eq:DN-kernel}
			(\Lambda_qf)(x)=-C_{n,s}\int_{\Omega\cup W_1}\frac{u_{q,f}(y)}{|x-y|^{n+2s}}\,dy.
		\end{equation}
		Since $\overline{W_2}$ is separated from $\overline\Omega\cup\overline{W_1}$, we may differentiate under the integral. Cauchy--Schwarz on the bounded support and the estimate for the energy solution give the asserted bounds. Integration against $\mu$ is therefore well-defined.
	\end{proof}
	
	\begin{proposition}[Reciprocity]\label{prop:reciprocity}
		Let $f\in C_c^\infty(W_1)$ and let $\mu$ be a finite signed Borel measure with $\supp\mu\Subset W_2$, where $W_1,W_2$ are as in Lemma~\ref{lem:exterior-observation}. Then
		\begin{equation}\label{eq:reciprocity}
			\int_{W_2}\Lambda_qf\,d\mu=\int_{W_1}f(-\Delta)^su_{q,\mu}\,dx.
		\end{equation}
	\end{proposition}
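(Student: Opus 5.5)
The plan is to expand both sides of \eqref{eq:reciprocity} with the exterior kernel formulas \eqref{eq:DN-kernel} and \eqref{eq:measure-exterior}, cancel the cross term between $f$ and $\mu$, and reduce what remains to the symmetry of the Green operator.

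\textbf{Left side.} Write the energy solution as $u_{q,f}=f+w_f$, where $w_f=\mathcal G_qF_f\in X$; this is the case $\mu=f\,dx$ of Lemma~\ref{lem:measure-wellposed}. For $x\in W_2$, formula \eqref{eq:DN-kernel} gives
\[
(\Lambda_qf)(x)=-C_{n,s}\int_\Omega\frac{w_f(y)}{|x-y|^{n+2s}}\,dy-C_{n,s}\int_{W_1}\frac{f(y)}{|x-y|^{n+2s}}\,dy .
\]
I integrate against $\mu$ and apply Fubini. This is allowed because the kernel is bounded on $\supp\mu\times(\overline\Omega\cup\overline{W_1})$. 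The first term becomes $-\int_\Omega w_f F_\mu\,dx$, by the definition \eqref{eq:forcing}. The second term becomes
\[
-C_{n,s}\iint f(y)|x-y|^{-n-2s}\,dy\,d\mu(x).
\]

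\textbf{Right side.} The function $f$ is supported in $W_1$, which lies in $\R^n\setminus(\overline\Omega\cup K)$. So I can use \eqref{eq:measure-exterior} pointwise on $\supp f$:
\[
(-\Delta)^su_{q,\mu}(x)=-C_{n,s}\int_\Omega\frac{v_{q,\mu}(y)}{|x-y|^{n+2s}}\,dy-C_{n,s}\int_K\frac{d\mu(y)}{|x-y|^{n+2s}} .
\]
Integrating against $f$ and using Fubini, the first term becomes $-\int_\Omega v_{q,\mu}F_f\,dx$. The second term is the same double integral as the second term on the left side, so the two cross terms cancel.

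\textbf{Remaining identity.} It remains to show $\int_\Omega w_fF_\mu\,dx=\int_\Omega v_{q,\mu}F_f\,dx$. This is
\[
\int_\Omega(\mathcal G_qF_f)F_\mu\,dx=\int_\Omega(\mathcal G_qF_\mu)F_f\,dx,
\]
which is exactly the symmetry \eqref{eq:sym_of_Green_kernel}, since $F_f,F_\mu\in L^2(\Omega)$. Equivalently, it follows from the transposition formula \eqref{eq:transposition} with $h=F_f$.

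\textbf{Main obstacle.} The step needing care is justifying that the distribution $(-\Delta)^su_{q,\mu}$ on $W_1$ is given by the smooth function \eqref{eq:measure-exterior}, so that pairing it with $f$ is an honest integral. This holds because $\overline{W_1}$ is at positive distance from both $\overline\Omega$ and $K$. One must also check that $\Lambda_qf$ on $W_2$ is the pointwise kernel formula, which is Lemma~\ref{lem:exterior-observation}.

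All the Fubini interchanges are legitimate because the kernels are bounded on the relevant product sets. The remaining integrability needs are $f\in L^1$, $w_f,v_{q,\mu}\in L^1(\Omega)$, and $|\mu|$ finite.
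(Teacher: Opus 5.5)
Your proposal is correct and follows essentially the same route as the paper. Both arguments split $u_{q,f}=\mathcal G_qF_f+f$ and $u_{q,\mu}=\mathcal G_qF_\mu+\mu$, evaluate the exterior fractional Laplacians by their kernel formulas with Fubini (justified by the separation of supports), match the $f$--$\mu$ cross terms, and reduce what remains to the symmetry \eqref{eq:sym_of_Green_kernel}; the only cosmetic difference is that you expand $\Lambda_qf$ directly through \eqref{eq:DN-kernel}, while the paper writes it as $(-\Delta)^sv_f+(-\Delta)^sf$ and uses \eqref{eq:green-exterior}.
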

	\begin{proof}
		Write $u_{q,f}=v_f+f$ and $u_{q,\mu}=v_\mu+\mu$, where $v_f=\mathcal G_qF_f$ and $v_\mu=\mathcal G_qF_\mu$. Since $F_f,F_\mu\in L^2(\Omega)$, the symmetry \eqref{eq:sym_of_Green_kernel} gives
		\[
		\int_\Omega F_\mu v_f\,dx
		=\int_\Omega F_\mu(\mathcal G_qF_f)\,dx
		=\int_\Omega F_f(\mathcal G_qF_\mu)\,dx
		=\int_\Omega F_fv_\mu\,dx.
		\]
		By \eqref{eq:green-exterior} and Fubini's theorem,
		\[
		\int_{W_2}(-\Delta)^sv_f(z)\,d\mu(z)
		=-C_{n,s}\int_\Omega v_f(x)
		\int_{W_2}\frac{d\mu(z)}{|z-x|^{n+2s}}\,dx
		=-\int_\Omega F_\mu v_f\,dx.
		\]
		The same calculation with $v_\mu$ and $f$ gives $\int_{W_1}f(-\Delta)^sv_\mu\,dx
		=-\int_\Omega F_fv_\mu\,dx$. Moreover, since $\supp f\subset W_1$ and $\supp\mu\subset W_2$ are separated,
		\[
		\int_{W_2}(-\Delta)^sf\,d\mu
		=-C_{n,s}\int_{W_2}\int_{W_1}
		\frac{f(x)}{|z-x|^{n+2s}}\,dx\,d\mu(z)
		=\int_{W_1}f(-\Delta)^s\mu\,dx.
		\]
		Combining these identities, we obtain
		\begin{align*}
			\int_{W_2}\Lambda_qf\,d\mu&=-\int_\Omega F_\mu v_f\,dx+\int_{W_2}(-\Delta)^sf\,d\mu\\
			&=-\int_\Omega F_fv_\mu\,dx+\int_{W_1}f(-\Delta)^s\mu\,dx=\int_{W_1}f(-\Delta)^s(v_\mu+\mu)\,dx.
		\end{align*}
		The separation of the supports, the finite total variation of $\mu$, and $v_f,v_\mu\in L^1(\Omega)$ justify the interchanges of integrals. The last integral is an ordinary integral by \eqref{eq:measure-exterior}.
	\end{proof}
	
	For $\mu=\delta_{x_0}$, \eqref{eq:reciprocity} becomes $(\Lambda_qf)(x_0)=\int_{W_1}f(-\Delta)^su_{q,\delta_{x_0}}\,dx$. Thus, all point observations at $x_0$ determine $(-\Delta)^su_{q,\delta_{x_0}}$ on $W_1$.
	
	\subsection{Positive exterior data and comparison of potentials}\label{sec:positive-data}
	For $f\in C_c^\infty(\Omega_e)$, write $v_{q,f}=v_{q,f\,dx}=u_{q,f}-f$. If $q\geq0$, the form $B_q$ is coercive on $X$, so \eqref{eq:nonresonance} holds. We prove positivity under the more general assumption that $B_q$ is coercive.
	
	\begin{lemma}\label{lem:positive-measure}
		Let $0<s<1$ and let $q\in L^\infty(\Omega)$ be such that $B_q$ is coercive on $X$. If $\mu$ is a nonzero finite nonnegative Borel measure with $\supp\mu\Subset\Omega_e$, then for each compact set $D\Subset\Omega$, there is a constant $c_D>0$ such that $v_{q,\mu}\geq c_D$ almost everywhere on $D$. In particular, $v_{q,\mu}>0$ almost everywhere in $\Omega$.
	\end{lemma}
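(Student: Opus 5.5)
The plan is a weak maximum principle followed by a comparison with an explicit subsolution. Since $F_\mu(x)=C_{n,s}\int_K|x-z|^{-n-2s}\,d\mu(z)$ and $K$ has positive distance $d_K$ from $\overline\Omega$, $F_\mu$ is strictly positive on $\overline\Omega$. More precisely,
\[
F_\mu\geq c_0:=C_{n,s}\,(\operatorname{diam}(\Omega\cup K))^{-n-2s}\,\mu(K)>0 \quad \text{on } \overline\Omega.
\]

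\textbf{Step 1: nonnegativity.} Test \eqref{eq:source} with $\phi=v^-=\max(-v,0)\in X$, where $v=v_{q,\mu}$; truncation preserves $X$. Write $v=v^+-v^-$. The Gagliardo form gives
\[
\int(-\Delta)^{s/2}v^+(-\Delta)^{s/2}v^-\,dx=-C\iint\frac{v^+(x)v^-(y)+v^+(y)v^-(x)}{|x-y|^{n+2s}}\,dx\,dy\leq0 .
\]
Since $v^+v^-=0$, this yields
\[
-B_q(v^-,v^-)\;\geq\;B_q(v,v^-)=\int_\Omega F_\mu v^-\,dx\geq0 .
\]
Coercivity of $B_q$ then forces $v^-=0$, so $v\geq0$.

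\textbf{Step 2: quantitative lower bound.} Here I would use the nonlocality. Rather than building a barrier, test \eqref{eq:source} with $\phi=(w-v)^+$ for a suitable comparison function $w\in X$ satisfying $((-\Delta)^s+q)w\leq F_\mu$ in $\Omega$; the argument of Step 1 applied to $w-v$ gives $v\geq w$. A concrete choice is $w=\eps\,\psi$, with $\psi\in C_c^\infty(\Omega)$, $0\leq\psi\leq1$, $\psi=1$ on $D$. Then $((-\Delta)^s+q)\psi$ is bounded on $\Omega$, say by $M$, because $\psi$ is smooth and compactly supported and $q\in L^\infty$. Choosing $\eps=c_0/M$ gives $((-\Delta)^s+q)(\eps\psi)\leq c_0\leq F_\mu$ in $\Omega$ in the weak sense against nonnegative test functions in $X$. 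Hence $v\geq\eps$ on $D$, so $c_D=\eps$. Since $\Omega$ is exhausted by compact sets, $v>0$ almost everywhere.

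\textbf{Main obstacle.} The key point is justifying that the comparison principle of Step 1 applies to $w-v$. This requires that for $g=w-v\in X$ with $B_q(g,\phi)\leq0$ for all nonnegative $\phi\in X$, one has $g^+=0$. Testing with $\phi=g^+$ and using the same cross-term sign computation gives $B_q(g^+,g^+)\leq B_q(g,g^+)\leq0$, and coercivity finishes. I also need to check that the weak inequality $B_q(\eps\psi,\phi)\leq\int F_\mu\phi$ holds for all $\phi\geq0$ in $X$, not only smooth ones. This follows by density of nonnegative $C_c^\infty(\Omega)$ functions in the nonnegative cone of $X$ (mollify $\phi^+$), together with continuity of both sides in $X$.
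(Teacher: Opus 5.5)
Your proof is correct and takes essentially the same approach as the paper: a weak maximum principle obtained by testing with the negative (respectively positive) part and invoking coercivity, followed by comparison of $v_{q,\mu}$ with a small multiple of a cutoff $\psi$, using $F_\mu\geq c>0$ on $\Omega$. The density step you flag is not needed, since $B_q(\psi,\phi)=\int_\Omega((-\Delta)^s\psi+q\psi)\phi\,dx$ holds directly for every $\phi\in X$ by Plancherel.
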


	\begin{proof}
		First, suppose that $w\in X$ satisfies $B_q(w,\phi)\geq0$ for every nonnegative $\phi\in X$. Its negative part $w^-=\max\{-w,0\}$ belongs to $X$, and $(a-b)(a^--b^-)\leq-|a^--b^-|^2$ gives $B_q(w,w^-)\leq-B_q(w^-,w^-)$. By coercivity, we have $w^-=0$, so $w\geq0$ almost everywhere.
		
		Let $K=\supp\mu$. Since $K$ and $\overline\Omega$ are disjoint compact sets and $\mu\geq0$ is nonzero, \eqref{eq:forcing} gives $F_\mu\geq c>0$ on $\Omega$. The comparison argument applied to \eqref{eq:source} shows $v_{q,\mu}\geq0$. Given $D\Subset\Omega$, choose $0\leq\psi\in C_c^\infty(\Omega)$ with $\psi\geq1$ on $D$. Since $(-\Delta)^s\psi$ is bounded and $q\in L^\infty(\Omega)$, we may choose $t>0$ so that $t((-\Delta)^s\psi+q\psi)\leq c$ almost everywhere in $\Omega$. Then $B_q(v_{q,\mu}-t\psi,\phi)\geq0$ for all nonnegative $\phi\in X$. It follows that $v_{q,\mu}\geq t\psi\geq t$ almost everywhere on $D$. A countable compact exhaustion proves the last assertion.
	\end{proof}
	
	\begin{proposition}\label{prop:comparison-identity}
		Let $W_1,W_2\Subset\Omega_e$ with $\overline{W_1}\cap \overline{W_2}=\emptyset$. Let $f\in C_c^\infty(W_1)$, and $\mu$ be a finite signed Borel measure with $\supp\mu\Subset W_2$. If $q_1,q_2\in L^\infty(\Omega)$ satisfy \eqref{eq:nonresonance}, then
		\begin{equation}\label{eq:comparison-identity}
			\mathcal M_{q_1,\mu}(f)-\mathcal M_{q_2,\mu}(f)
			=\int_\Omega(q_1-q_2)v_{q_2,f}v_{q_1,\mu}\,dx.
		\end{equation}
	\end{proposition}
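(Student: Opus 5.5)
We prove \eqref{eq:comparison-identity} by combining the reciprocity formula of Proposition~\ref{prop:reciprocity} with the symmetry of the Green operators. By \eqref{eq:reciprocity}, $\mathcal M_{q_j,\mu}(f)=\int_{W_2}\Lambda_{q_j}f\,d\mu$. We write $u_{q_j,f}=v_{q_j,f}+f$ and use \eqref{eq:measure-exterior} with $\mu=f\,dx$ on $W_2$. The contribution of $f$ itself is independent of $j$ and cancels, which leaves
\[
\mathcal M_{q_1,\mu}(f)-\mathcal M_{q_2,\mu}(f)=-\int_\Omega F_\mu\,(v_{q_1,f}-v_{q_2,f})\,dx .
\]
This step is the Fubini computation already used in the proof of Proposition~\ref{prop:reciprocity}: integrating \eqref{eq:green-exterior} against $\mu$ gives $\int_{W_2}(-\Delta)^s v\,d\mu=-\int_\Omega F_\mu v\,dx$ for $v\in X$.

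Next we identify $w=v_{q_1,f}-v_{q_2,f}\in X$. Both functions solve \eqref{eq:source} with the same source $F_f$ but with potentials $q_1$ and $q_2$. Subtracting the two equations gives $B_{q_1}(w,\phi)=-\int_\Omega(q_1-q_2)v_{q_2,f}\phi\,dx$ for all $\phi\in X$. Since $(q_1-q_2)v_{q_2,f}\in L^2(\Omega)$, it follows that $w=-\mathcal G_{q_1}\big((q_1-q_2)v_{q_2,f}\big)$.

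Substituting this into the expression above and applying the symmetry \eqref{eq:sym_of_Green_kernel} of $\mathcal G_{q_1}$, we obtain
\[
-\int_\Omega F_\mu w\,dx=\int_\Omega F_\mu\,\mathcal G_{q_1}\big((q_1-q_2)v_{q_2,f}\big)\,dx=\int_\Omega (q_1-q_2)v_{q_2,f}\,\mathcal G_{q_1}F_\mu\,dx .
\]
By Lemma~\ref{lem:measure-wellposed}, $\mathcal G_{q_1}F_\mu=v_{q_1,\mu}$, and this gives \eqref{eq:comparison-identity}.

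The only delicate points are bookkeeping. First, we must fix the sign convention in the exterior formula, where the contributions from $v$ carry a minus sign. Second, we must justify the interchange of integrals against the signed measure $\mu$. Both are handled by the separation of $\overline{W_2}$ from $\overline\Omega\cup\overline{W_1}$, exactly as in Lemma~\ref{lem:exterior-observation}. Note also that the argument needs no positivity of $q_j$ or coercivity of $B_{q_j}$; the nonresonance condition \eqref{eq:nonresonance}, which makes $\mathcal G_{q_j}$ well defined, suffices.
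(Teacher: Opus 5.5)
Your proposal is correct and is essentially the paper's proof: you reduce the data difference to $-\int_\Omega F_\mu w\,dx$ by the exterior kernel formula and Fubini, then pair the equation for $w$ with the $q_1$-problem for $\mu$, via the symmetry \eqref{eq:sym_of_Green_kernel} of $\mathcal G_{q_1}$ where the paper tests $B_{q_1}$ with $\phi=v_{q_1,\mu}$ (the same fact). One small slip: $\mathcal M_{q_j,\mu}(f)=\int_{W_2}\Lambda_{q_j}f\,d\mu$ is just the definition \eqref{eq:measurement}, not Proposition~\ref{prop:reciprocity}, which your argument never actually uses.
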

	
	\begin{proof}
		Set $w=v_{q_1,f}-v_{q_2,f}\in X$. Subtracting the equations for $v_{q_1,f}$ and $v_{q_2,f}$ gives $B_{q_1}(w,\phi)=-\int_\Omega(q_1-q_2)v_{q_2,f}\phi\,dx$ for every $\phi\in X$. Since the full solutions have the same exterior value, their difference is $w$. By \eqref{eq:DN-kernel} and Fubini's theorem,
		\[
		\mathcal M_{q_1,\mu}(f)-\mathcal M_{q_2,\mu}(f)=-C_{n,s}\int_\Omega w(y)\int_{W_2}\frac{d\mu(x)}{|x-y|^{n+2s}}\,dy=-\int_\Omega F_\mu w\,dy.
		\]
		The interchange is justified by the separation of $W_2$ from $\overline\Omega$, the finite total variation of $\mu$, and $w\in L^1(\Omega)$. Using \eqref{eq:source} and the equation for $w$ with $\phi=v_{q_1,\mu}$, we obtain $-\int_\Omega F_\mu w\,dy=-B_{q_1}(v_{q_1,\mu},w)=\int_\Omega(q_1-q_2)v_{q_2,f}v_{q_1,\mu}\,dy$, as claimed.
	\end{proof}
	
	\section{Unique continuation for exterior measure data}\label{sec:ucp}
	
	We recall unique continuation on open sets for the fractional Laplacian and then prove the local extension estimates needed to apply \cite[Section~5]{GRSU20} to the solutions of Definition~\ref{def:measure-solution}.
	
	\begin{lemma}[\cite{GSU20}, Theorem~1.2]\label{lem:open-ucp}
		Let $0<s<1$ and let $h\in H^t(\R^n)$ for some $t\in\R$. If $h=(-\Delta)^sh=0$ in a nonempty open subset of $\R^n$, then $h=0$ in $\R^n$.
	\end{lemma}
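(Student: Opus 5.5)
This is the unique continuation theorem of \cite[Theorem~1.2]{GSU20}, so the proof will recall its main lines and not redo the Carleman estimates. We use the Caffarelli--Silvestre extension. First I would mollify to make $h$ smooth. Fix a point and a ball $B$ with $\overline B$ inside the open set $U$ where $h=(-\Delta)^sh=0$. Take $h_\eps=\rho_\eps*h$. Convolution commutes with $(-\Delta)^s$, so $h_\eps=(-\Delta)^sh_\eps=0$ on a slightly smaller ball for small $\eps$. Also $h_\eps\in H^\infty(\R^n)$. If each $h_\eps$ vanishes identically, then $h=\lim h_\eps=0$ in $\mathscr D'$. Hence we may assume $h\in H^r$ for every $r$. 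Replacing $h$ by $(1-\Delta)^{-m}h$ is not allowed, since that would break the vanishing on $U$. Mollification is the correct reduction.

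Next, for smooth $h\in H^r$ with $r$ large, let $\tilde h(x,y)$ be the Caffarelli--Silvestre extension. It solves $\nabla\cdot(y^{1-2s}\nabla\tilde h)=0$ in $\R^{n+1}_+$ with $\tilde h(\cdot,0)=h$. It also satisfies $-\lim_{y\to0}y^{1-2s}\p_y\tilde h=c_s(-\Delta)^sh$. On $U\times\{0\}$ both the Dirichlet and the weighted Neumann traces vanish. Extend $\tilde h$ evenly across $y=0$ over $U$. Because the conormal derivative vanishes, the even extension is a weak solution of the degenerate elliptic equation $\nabla\cdot(|y|^{1-2s}\nabla\tilde h)=0$ in $U\times(-1,1)$. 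The weight $|y|^{1-2s}$ is Muckenhoupt $A_2$.

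The key step is weak unique continuation for this $A_2$-weighted equation, together with vanishing to infinite order at boundary points. I would use the boundary Carleman estimate of R\"uland, or Almgren frequency monotonicity for $A_2$ weights as in Fabes--Garofalo--Lin. Since $\tilde h$ vanishes on the open set $U\times\{0\}$ with zero conormal derivative, a Carleman estimate with a weight centered at a point of $U\times\{0\}$ shows that $\tilde h$ vanishes in a half-ball above $U$. The extension is real-analytic in $y>0$. Standard unique continuation for the uniformly elliptic equation in $\{y>0\}$ then gives $\tilde h\equiv0$ in $\R^{n+1}_+$. Taking the trace yields $h=0$.

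I expect the main obstacle to be this boundary step: proving that $\tilde h$ vanishes near $U\times\{0\}$ from Cauchy data that vanish on a codimension-one set, with a degenerate or singular weight. Here I would simply invoke the Carleman estimate of \cite{GSU20} (following R\"uland) rather than re-prove it. The remaining points are routine: commuting mollification with $(-\Delta)^s$, and the $H^r$ regularity needed for the extension traces.
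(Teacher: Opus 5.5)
Your proposal is correct and matches the cited result: the paper gives no proof of its own and quotes \cite[Theorem~1.2]{GSU20}, whose argument is your chain of mollification to reduce to smooth $h$, the Caffarelli--Silvestre extension, R\"uland's boundary unique continuation for vanishing Dirichlet and weighted Neumann data \cite[Proposition~2.2]{ruland2015unique}, and interior unique continuation in $\{y>0\}$. The paper uses these same tools at the end of the proof of Proposition~\ref{prop:local-ucp}.
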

	
	A finite measure has a bounded Fourier transform and belongs to $H^{-m}(\R^n)$ for every $m>n/2$. For an interior solution $v\in X$ associated with $\mu$, the distribution $u=v+\mu$ belongs to the same space. If $v$ vanishes on a nonempty open subset of $\Omega$, then both $u$ and $(-\Delta)^su$ vanish there. Lemma~\ref{lem:open-ucp} gives $u=0$ on $\R^n$, and restricting to $\Omega_e$ gives $\mu=0$. Thus, a solution associated with a nonzero measure cannot vanish on an interior open set.
	
	\subsection{Local properties of the extension}
	
	Let $a=1-2s\in(-1,1)$. The Poisson kernel with respect to the Caffarelli--Silvestre extension is
	\begin{equation}\label{eq:extension-kernel}
		P_s(x,y)=p_{n,s}\frac{y^{2s}}{(|x|^2+y^2)^{(n+2s)/2}},\quad y>0,
	\end{equation}
	where $p_{n,s}=\Gamma(n/2+s)/(\pi^{n/2}\Gamma(s))$, so $\int_{\R^n}P_s(x,y)\,dx=1$. For $v\in H^s(\R^n)$, its extension $V=P_s*v$ satisfies $\operatorname{div}(y^a\nabla V)=0$ and
	\begin{equation}\label{eq:extension-conormal}
		-\lim_{y\to0}y^a\p_yV=\kappa_s(-\Delta)^sv,
	\end{equation}
	where $\kappa_s=\Gamma(1-s)/(2^{2s-1}\Gamma(s))$ and the limit is in $H^{-s}(\R^n)$; see \cite{CS07}. The constants satisfy $2sp_{n,s}=\kappa_s C_{n,s}$. The kernel formula \eqref{eq:extension-kernel} may also be obtained from the Euclidean heat kernel in \cite[Lemma~6.2, equation~(32)]{CGRU2023reduction}.
	
	For $u=v+\mu$ as in Definition~\ref{def:measure-solution}, let $K=\supp\mu$ and define
	\begin{equation}\label{eq:measure-extension}
		U(x,y)=\int_\Omega P_s(x-z,y)v(z)\,dz
		+\int_K P_s(x-z,y)\,d\mu(z),\quad y>0.
	\end{equation}
	Both integrals converge absolutely since $v\in L^1(\Omega)$ and $\mu$ is finite. Write $B_r'(x_0)=\{x:|x-x_0|<r\}$ and $B_r^+(x_0)=\{(x,y):y>0,\ |x-x_0|^2+y^2<r^2\}$. We omit $x_0$ when it is zero. We use $\|V\|_{H^1(D;y^a)}^2=\int_D y^a(|V|^2+|\nabla V|^2)\,dx\,dy$, and the local trace on $y=0$ belongs to $H^s$ by \cite[Lemma~5.3]{GRSU20}.
	
	\begin{lemma}\label{lem:extension}
		Let $v\in X$ be an interior solution associated with $\mu$, and $B_{2r}'(x_0)\Subset\Omega$. Then $U$ in \eqref{eq:measure-extension} belongs to $H^1(B_{2r}^+(x_0);y^a)$, has trace $v$, and satisfies
		\begin{equation}\label{eq:measure-Robin}
			\int_{B_{2r}^+(x_0)}y^a\nabla U\cdot\nabla\Phi\,dx\,dy
			=-\kappa_s\int_{B_{2r}'(x_0)}qv\Phi(x,0)\,dx
		\end{equation}
		for every smooth test function supported away from the curved boundary.
	\end{lemma}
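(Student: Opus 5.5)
The plan is to split $U=V_1+V_2$, where $V_1(x,y)=\int_\Omega P_s(x-z,y)v(z)\,dz$ is the Caffarelli--Silvestre extension of $v\in X\subset H^s(\R^n)$, and $V_2(x,y)=\int_K P_s(x-z,y)\,d\mu(z)$ is the extension of the measure part.

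For $V_1$, the results quoted from \cite{CS07} and \cite[Lemma~5.3]{GRSU20} give $V_1\in H^1(\R^{n+1}_+;y^a)$ locally, with trace $v$. They also give the weak conormal identity
\[
\int y^a\nabla V_1\cdot\nabla\Phi\,dx\,dy=\kappa_s\langle(-\Delta)^sv,\Phi(\cdot,0)\rangle
\]
for test functions $\Phi$ compactly supported in $\overline{\R^{n+1}_+}$. To pass to $B_{2r}^+(x_0)$, one only needs $\Phi$ supported away from the curved boundary, extended by zero.

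For $V_2$, fix $\delta=\operatorname{dist}(K,B_{2r}'(x_0))/2>0$. On the closed half-ball $\overline{B_{2r}^+(x_0)}$ the kernel $P_s(x-z,y)$ and all its derivatives are uniformly bounded for $z\in K$. Differentiating under the integral therefore shows $V_2\in C^\infty$ up to $y=0$ there, with $V_2(x,0)=0$ and $\operatorname{div}(y^a\nabla V_2)=0$.

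The derivative $\p_yP_s$ behaves like $2s\,p_{n,s}y^{2s-1}|x-z|^{-n-2s}+O(y^{2s+1})$ when $|x-z|\geq\delta$. Hence $y^a\p_yV_2\to 2sp_{n,s}\int_K|x-z|^{-n-2s}d\mu(z)$ uniformly on $B'_{2r}(x_0)$. Using $2sp_{n,s}=\kappa_sC_{n,s}$, this equals $\kappa_sF_\mu(x)$, and hence $-\lim y^a\p_yV_2=-\kappa_sF_\mu=\kappa_s(-\Delta)^s\mu$ in $\Omega$. The weighted $H^1$ bound for $V_2$ is immediate: $|\nabla V_2|\lesssim y^{2s-1}$, so $y^a|\nabla V_2|^2\lesssim y^{2s-1}$, which is integrable near $y=0$. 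Integrating by parts on $B_{2r}^+$ with a cutoff in $y$, then letting the cutoff tend to $0$, gives
\[
\int y^a\nabla V_2\cdot\nabla\Phi=\kappa_s\int F_\mu\Phi(x,0)\,dx.
\]
The boundary terms on the curved part vanish because of the support of $\Phi$.

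Adding the two identities, the right-hand side becomes $\kappa_s\big(\langle(-\Delta)^sv,\Phi(\cdot,0)\rangle-\int F_\mu\Phi(\cdot,0)\big)$. Since $\Phi(\cdot,0)$ is smooth and compactly supported in $B'_{2r}(x_0)\Subset\Omega$, it lies in $X$ after a smooth cutoff. Testing \eqref{eq:source} with it gives $\langle(-\Delta)^sv,\phi\rangle=\int F_\mu\phi-\int q v\phi$, which yields \eqref{eq:measure-Robin}.

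The main obstacle is the localization of the $V_1$ identity. The global Caffarelli--Silvestre statement tests against functions in $H^1(\R^{n+1}_+;y^a)$. For $\Phi$ not compactly supported in $y$ (only away from the curved boundary, hence compactly supported in the half-ball), this is fine, because $\Phi$ extends by zero to a legitimate global test function. The remaining care is identifying the $H^{-s}$ limit in \eqref{eq:extension-conormal} with the pairing against the trace $\Phi(\cdot,0)\in H^s$. I would do this via a density argument with $\Phi\in C_c^\infty$, citing \cite[Lemma~5.3]{GRSU20} for the trace bound.
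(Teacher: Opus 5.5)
Your proposal is correct and follows the paper's proof essentially line by line: the same splitting $U=V+M$, the same bounds $|M|+|\nabla_xM|\lesssim y^{2s}$ and $|\partial_yM|\lesssim y^{2s-1}$, the same conormal limit via $2sp_{n,s}=\kappa_sC_{n,s}$, and the same final combination with $(-\Delta)^sv+qv=F_\mu$ in $\Omega$. Two slips need fixing: $V_2$ is in general not smooth up to $y=0$ (it is $y^{2s}$ times a function smooth up to $y=0$, as your own gradient bound shows), and the displayed identity for $V_2$ should read $\int y^a\nabla V_2\cdot\nabla\Phi\,dx\,dy=-\kappa_s\int F_\mu\Phi(x,0)\,dx$, which is the sign you actually use when adding the two identities.
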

	
	\begin{proof}
		Write $U=V+M$ with $V=P_s*v$ and $M=P_s*\mu$. The extension theorem gives finite weighted Dirichlet energy for $V$. Also $\|V(\cdot,y)\|_{L^2}\leq\|v\|_{L^2}$, and $a>-1$, so $V$ has finite weighted $L^2$ norm in the half-ball. It has trace $v$ and conormal derivative \eqref{eq:extension-conormal}.
		
		The distance from $\overline{B_{2r}'(x_0)}$ to $K$ is positive. Differentiating under the measure integral gives
		\begin{equation}\label{eq:singular-extension-bounds}
			|M(x,y)|+|\nabla_xM(x,y)|\leq C\|\mu\|_{\mathcal M}y^{2s},\quad |\p_yM(x,y)|\leq C\|\mu\|_{\mathcal M}y^{2s-1}
		\end{equation}
		for $x\in B_{2r}'(x_0)$ and $0<y<2r$. Since $y^a|\p_yM|^2\leq C\|\mu\|_{\mathcal M}^2y^{2s-1}$, the vertical derivative has finite weighted energy near $y=0$. The other terms are integrable as well. Thus, $M\in H^1(B_{2r}^+(x_0);y^a)$ and has zero trace. Moreover,
		\begin{equation}\label{eq:measure-conormal}
			-\lim_{y\to0}y^a\p_yM(x,y)
			=-2sp_{n,s}\int_K\frac{d\mu(z)}{|x-z|^{n+2s}}
			=-\kappa_s F_\mu(x),
		\end{equation}
		locally uniformly in $\Omega$.
		
		Both $V$ and $M$ are weighted harmonic for $y>0$. Since $(-\Delta)^sv+qv=F_\mu$ in $\Omega$, \eqref{eq:extension-conormal} and \eqref{eq:measure-conormal} give $-\lim_{y\to0}y^a\p_yU=-\kappa_s qv$ locally in $H^{-s}$. Integration by parts proves \eqref{eq:measure-Robin}. Since $qv\in L^2(\Omega)$, the boundary pairing is the integral in that formula.
	\end{proof}
	
	\subsection{Unique continuation from a set of positive measure}
	
	We now prove a local version of \cite[Proposition~5.1]{GRSU20}. This is needed here because $v+\mu$ need not belong to $H^s(\R^n)$, although its extension has finite weighted energy locally above $\Omega$.
	
	\begin{proposition}\label{prop:local-ucp}
		Let $1/4\le s<1$, $a=1-2s$, and $b\in L^\infty(B_{2R}')$. Suppose that $U\in H^1(B_{2R}^+;y^a)$ satisfies
		\begin{equation}\label{eq:local-Robin}
			\int_{B_{2R}^+}y^a\nabla U\cdot\nabla\Phi\,dx\,dy
			=\int_{B_{2R}'}b\Tr U\Phi(x,0)\,dx
		\end{equation}
		for every smooth test function supported away from the curved boundary. If $\Tr U=0$ almost everywhere on a measurable set $E\subset B_{2R}'$ with $|E|>0$, then $U=0$ in $B_{2R}^+$.
	\end{proposition}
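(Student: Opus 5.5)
We follow the strategy of \cite[Section~5]{GRSU20}, which establishes unique continuation from sets of positive measure for the Caffarelli--Silvestre extension with a bounded Robin potential. There it is done for global $H^s$ solutions, but the argument is local in the half-ball. The plan has three steps. First, reduce to a density point. Second, use the local estimates of \cite[Section~5]{GRSU20} (boundary doubling and an $L^2$-to-$L^\infty$ bound for $U$ and $y^a\partial_yU$) to show vanishing of infinite order at that point. Third, invoke the known strong unique continuation for the extension with Robin condition.

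\textbf{Reduction.} Pick a Lebesgue density point $x_*$ of $E$ with $|x_*|<R$, say, and work on $B_{\rho}^+(x_*)\subset B_{2R}^+$. By translating and rescaling $U_\lambda(x,y)=U(x_*+\lambda x,\lambda y)$, equation \eqref{eq:local-Robin} is preserved with $b$ replaced by $\lambda^{2s}b(x_*+\lambda\,\cdot)$. Since $2s>0$, this potential has $L^\infty$ norm tending to zero as $\lambda\to0$. This is the rescaling argument alluded to in the introduction: it puts us in the small-potential regime where the constants of \cite[Section~5]{GRSU20} are uniform.

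\textbf{Infinite-order vanishing.} Suppose $U\not\equiv0$. By weak unique continuation for degenerate elliptic equations with $A_2$ weights plus the Robin condition, $U$ is then nontrivial near every boundary point. The Carleman/frequency estimates of \cite{GRSU20} give a doubling inequality
\[
\int_{B_{2r}^+}y^a U^2\le C\int_{B_r^+}y^aU^2
\]
uniformly at small scales near $x_*$. Combining doubling with Caccioppoli and the trace estimate \cite[Lemma~5.3]{GRSU20} yields a bound $\|\Tr U\|_{L^2(B'_r)}^2\ge c\,r^{N}$ for some finite $N$. On the other hand, $\Tr U$ lies in a local $H^s$ space, and for $s\ge1/4$ the trace is H\"older-regular enough to be controlled by a Poincar\'e-type inequality on sets of large density. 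Concretely, for $w=\Tr U$ vanishing on $E\cap B_r'$ with $|E\cap B'_r|\ge(1-\eps)|B'_r|$, we expect
\[
\|w\|_{L^2(B'_r)}\le C\eps^{\theta}r^{s}[w]_{H^s(B'_r)}.
\]
The restriction $s\ge1/4$ enters here, as in \cite{GRSU20}, through the available regularity of $\Tr U$ and the boundedness of $y^a\partial_yU$. Iterating over dyadic scales with $\eps\to0$ at the density point contradicts the polynomial lower bound. Hence $U$ vanishes to infinite order at $(x_*,0)$.

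\textbf{Conclusion and main obstacle.} Strong unique continuation at the boundary for the Robin problem \cite{GRSU20} then gives $U\equiv0$ near $(x_*,0)$. Weak unique continuation in the connected half-ball gives $U=0$ in $B_{2R}^+$. The main obstacle is the second step. One must check that every estimate imported from \cite[Section~5]{GRSU20} uses only the local equation \eqref{eq:local-Robin} and local weighted energy, never global $H^s(\R^n)$ membership. The constants must also stay uniform under the rescaling. I would go through their doubling and regularity lemmas one by one, replacing each global norm by the $H^1(B_{2R}^+;y^a)$ norm via cutoffs. The commutator terms produced by the cutoffs are supported away from $y=0$ near $x_*$, so they are harmless.
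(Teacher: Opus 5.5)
\textbf{There is a genuine gap in the step that uses the density point.} Your reduction and the doubling step match the paper. Write $H(r),D(r)$ for the weighted $L^2$ norms of $U$ and $\nabla U$ on $B_r^+(x_*)$. Caccioppoli's inequality, the trace estimate and doubling control the trace and the gradient by the bulk, for instance $\|\Tr U\|_{L^2(B_{r/2}')}\le C(r^sD(r)+r^{s-1}H(r))$. None of them controls the bulk $H(r)$ by $\Tr U$. So they cannot give your lower bound $\|\Tr U\|_{L^2(B_r')}^2\ge cr^N$. Indeed, $y^{2s}$ is weighted harmonic with zero trace and nonzero bulk. Ruling this out for Robin solutions is a unique continuation statement for Cauchy data, not an energy estimate.

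The same defect blocks the iteration. Your Poincar\'e inequality is correct, but it only yields
$\|\Tr U\|_{L^2(B_r')}\le C\eps(r)^\theta r^s[\Tr U]_{H^s(B_r')}\le C\eps(r)^\theta r^{s-1}H(2r)$. This says the trace is small \emph{relative to the bulk}, which is compatible with doubling of $H$. To iterate over dyadic scales, or to contradict a polynomial bound, you would need the reverse estimate $r^{s-1}H(2r)\lesssim\|\Tr U\|_{L^2(B_{4r}')}$. That is a quantitative propagation of smallness from the Cauchy data $(\Tr U,\,b\Tr U)$, and you never supply it. The last step is also confused: you reach a contradiction with $U\not\equiv0$ and then "conclude" infinite-order vanishing. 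Moreover, infinite-order vanishing of the trace is not the hypothesis of the bulk strong unique continuation you invoke.

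\textbf{How the paper closes this step.} It supplies the missing link qualitatively, by a blow-up.
\begin{enumerate}
\item From the $H^1$ doubling, Caccioppoli and the trace estimate it obtains $D(r)\le C_Ur^{-1}H(r)$, hence $H(2r)+rD(2r)\le C_UH(r)$.
\item The rescalings $U_r(x,y)=r^{d_a/2}U(x_*+rx,ry)/H(r)$ are then bounded in $H^1(B_2^+;y^a)$ and have unit weighted $L^2$ norm on $B_1^+$.
\item They are precompact in $L^2(B_{3/2}^+;y^a)$, by the thin-strip estimate plus Rellich, and their traces are precompact in $L^2(B_{3/2}')$, by the compact $H^s$ embedding.
\item A limit $U_0$ has unit norm and zero trace on $B_1'$, because $x_*$ is a density point and the traces converge strongly. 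It also has zero conormal derivative, because the rescaled Robin coefficient is $r^{2s}b\to0$.
\item The Cauchy-data unique continuation of \cite[Proposition~2.2]{ruland2015unique} gives $U_0=0$, a contradiction.
\end{enumerate}
No Poincar\'e inequality or strong unique continuation is needed.

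\textbf{Where $s\ge1/4$ is used.} It does not enter through the regularity of $\Tr U$; your Poincar\'e inequality holds for every $s>0$ by Sobolev embedding. It enters in the Carleman estimate behind the doubling, where the boundary term $\int b|\Tr U|^2$ must be absorbed. At $s=1/4$ this requires first making $\rho^{2s}\|b\|_\infty$ small.
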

	
	\begin{proof}
		Suppose $U\not\equiv0$ and choose a Lebesgue density point $x_0\in E$. For small $r>0$, write $H(r)=\|U\|_{L^2(B_r^+(x_0);y^a)}$ and $D(r)=\|\nabla U\|_{L^2(B_r^+(x_0);y^a)}$. We have $H(r)>0$: otherwise interior unique continuation, applied where $y>0$ to an equation with smooth coefficients, would give $U=0$ throughout $B_{2R}^+$.\\
		
		\noindent\emph{Local doubling.}
		We claim that there are constants $r_*>0$ and $C_U$ such that
		\begin{equation}\label{eq:local-H1-doubling}
			H(2r)+rD(2r)\le C_U\big(H(r)+rD(r)\big),\quad 0<r<r_*.
		\end{equation}
		To apply the argument of \cite[Lemma~5.4]{GRSU20} locally, choose $\rho>0$ with $\overline{B_{5\rho}^+(x_0)}\subset B_{2R}^+\cup B_{2R}'$ and set $\widetilde U(x,y)=U(x_0+\rho x,\rho y)$ on $B_5^+$. The weak boundary coefficient in \eqref{eq:local-Robin} becomes $\rho^{2s}b(x_0+\rho x)$; equivalently, the coefficient in the normal-derivative convention of \cite{GRSU20} is $-\rho^{2s}b(x_0+\rho x)$. A radial cutoff, zero near the center and outside $B_4^+$, gives the hypotheses of \cite[Proposition~A.1]{GRSU20}. It contributes no conormal term on the flat boundary because $y^a\p_y\eta\to0$ there. The remaining volume terms lie in the two half-annuli where the cutoff varies and are bounded by the local weighted $H^1$ norm.
		
		The Carleman estimate in \cite[Proposition~A.1]{GRSU20} compares these annular norms as in \cite[Lemma~5.4]{GRSU20}. For $s>1/4$, its boundary term is absorbed by taking the Carleman parameter sufficiently large. At $s=1/4$, first decrease $\rho$ so that $\rho^{2s}\|b\|_{L^\infty(B_{2R}')}$ is small enough for the boundary absorption in Step~2 of \cite[Lemma~5.4]{GRSU20}. The norm on the fixed outer half-annulus is positive by interior unique continuation and gives a finite constant depending on $U$. This proves \eqref{eq:local-H1-doubling} from \eqref{eq:local-Robin} and the local weighted-energy assumption.
		
		By \cite[Lemmas~5.2 and~5.3]{GRSU20}, Caccioppoli's inequality and the trace estimate give
		\begin{align*}
			D(r/4)&\leq Cr^{-1}H(r/2)+C\|b\|_\infty^{1/2}\|\Tr U\|_{L^2(B_{r/2}'(x_0))},\\
			\|\Tr U\|_{L^2(B_{r/2}'(x_0))}&\leq C\big(r^sD(r)+r^{s-1}H(r)\big).
		\end{align*}
		Two applications of \eqref{eq:local-H1-doubling} give $D(r)\leq C_U(D(r/4)+r^{-1}H(r/4))$. Combining these bounds and making $r_*$ smaller to absorb $C_U\|b\|_\infty^{1/2}r^sD(r)$, we obtain $D(r)\leq C_Ur^{-1}H(r)$, as in \cite[Lemma~5.7]{GRSU20}. Thus \eqref{eq:local-H1-doubling} becomes
		\begin{equation}\label{eq:local-bulk-doubling}
			H(2r)+rD(2r)\leq C_UH(r),\quad 0<r<r_*.
		\end{equation}
		
		\smallskip
		\noindent\emph{Rescaling and compactness.}
		Put $d_a=n+1+a=n+2-2s$ and define $U_r(x,y)=r^{d_a/2}U(x_0+rx,ry)/H(r)$ on $B_2^+$. By \eqref{eq:local-bulk-doubling},
		\begin{equation}\label{eq:normalized-extension}
			\|U_r\|_{L^2(B_1^+;y^a)}=1,
			\quad \|U_r\|_{H^1(B_2^+;y^a)}\le C_U.
		\end{equation}
		The trace theorem bounds $\Tr U_r$ in $H^s(B_{7/4}')$, which is compactly embedded in $L^2(B_{3/2}')$ for every $n\geq1$ and $s>0$.
		
		We also need strong convergence in the weighted $L^2$ space. For $V\in H^1(B_2^+;y^a)$ and $S_\eps=B_{3/2}'\times(0,\eps)$, with $\eps>0$ small, weighted Cauchy--Schwarz on vertical segments gives
		\begin{equation}\label{eq:thin-strip}
			\int_{S_\eps}y^a|V|^2\,dx\,dy
			\leq C_s\eps^{a+1}\|\Tr V\|_{L^2(B_{3/2}')}^2
			+C_s\eps^2\int_{S_\eps}y^a|\p_yV|^2\,dx\,dy.
		\end{equation}
		For smooth $V$, use $|V(x,y)|^2\leq2|V(x,0)|^2+2y^{1-a}(1-a)^{-1}\int_0^y t^a|\p_yV(x,t)|^2\,dt$ and integrate in $y$; density gives \eqref{eq:thin-strip} for general $V$. Since $a>-1$, its right-hand side tends to zero uniformly on bounded subsets of $H^1(B_2^+;y^a)$. Away from $y=0$ the weight is bounded above and below, and ordinary Rellich compactness applies. A diagonal subsequence and \eqref{eq:thin-strip} give compactness in $L^2(B_{3/2}^+;y^a)$.
		
		We may choose $r_k\to 0$ and a subsequence such that
		\begin{equation}\label{eq:extension-limits}
			U_{r_k}\rightharpoonup U_0\text{ in }H^1(B_{3/2}^+;y^a),\quad 
			U_{r_k}\to U_0\text{ in }L^2(B_{3/2}^+;y^a),\quad 	\Tr U_{r_k}\to\Tr U_0\text{ in }L^2(B_{3/2}').
		\end{equation}
		In particular, $\|U_0\|_{L^2(B_1^+;y^a)}=1$.\\

		\noindent\emph{Boundary values of the limit.}
		Let $E_k=\{x\in B_1':x_0+r_kx\in E\}$. Since $x_0$ is a density point, $|B_1'\setminus E_k|\to0$, while $\Tr U_{r_k}=0$ almost everywhere on $E_k$. By strong convergence of the traces,
		\[
		\int_{B_1'}|\Tr U_0|^2\,dx\leq\|\Tr U_0-\Tr U_{r_k}\|_{L^2(B_1')}^2+\int_{B_1'\setminus E_k}|\Tr U_0|^2\,dx\longrightarrow0.
		\]
		The second term tends to zero by absolute continuity. Thus $\Tr U_0=0$ on $B_1'$.
		
		Rescaling \eqref{eq:local-Robin} gives
		\[
		\int_{B_{3/2}^+}y^a\nabla U_{r_k}\cdot\nabla\Phi\,dx\,dy=\int_{B_{3/2}'}r_k^{2s}b(x_0+r_kx)\Tr U_{r_k}\Phi(x,0)\,dx
		\]
		for tests supported away from the curved boundary. The traces are bounded in $L^2$, and $r_k^{2s}\to0$. Passing to the limit, we find that $U_0$ is weighted harmonic and has zero conormal derivative on $B_{3/2}'$.
		
		Even reflection across $y=0$ gives a weak solution of $\operatorname{div}(|y|^a\nabla U_0)=0$ in $B_{3/2}$. Since $|y|^a$ is an $A_2$ weight, the local boundedness theorem of \cite{fabes1982local} applies. On $B_1'$, both the trace and conormal derivative of $U_0$ vanish, so \cite[Proposition~2.2]{ruland2015unique} gives $U_0=0$ on a smaller half-ball. Interior unique continuation then gives $U_0=0$ throughout $B_{3/2}^+$. This contradicts the normalization in \eqref{eq:normalized-extension} and the strong convergence in \eqref{eq:extension-limits}.
	\end{proof}
	
	\begin{corollary}\label{cor:measure-ucp}
		Let $1/4\le s<1$ and $q\in L^\infty(\Omega)$. Suppose that $v\in X$ is an interior solution associated with $\mu$, and set $u=v+\mu$. If $v=0$ almost everywhere on a measurable set $E\subset\Omega$ with $|E|>0$, then $u=0$ in $\mathscr D'(\R^n)$, and consequently $v=0$ and $\mu=0$.
	\end{corollary}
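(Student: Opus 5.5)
We reduce the statement to Proposition~\ref{prop:local-ucp}, applied to the extension $U$ from \eqref{eq:measure-extension}, and then finish with the open-set unique continuation argument that follows Lemma~\ref{lem:open-ucp}.

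\emph{Step 1: localize the zero set.} Since $|E|>0$ and $\Omega$ is the union of countably many balls $B_{2r}'(x_0)$ with $B_{4r}'(x_0)\Subset\Omega$, some such ball satisfies $|E\cap B_{2r}'(x_0)|>0$. After translating, we take $x_0=0$ and $R=r$.

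\emph{Step 2: verify the hypotheses of Proposition~\ref{prop:local-ucp}.} Lemma~\ref{lem:extension} shows that $U\in H^1(B_{2r}^+;y^a)$, that $\Tr U=v$, and that the Robin identity \eqref{eq:measure-Robin} holds. This is exactly \eqref{eq:local-Robin} with $b=-\kappa_s q\in L^\infty(B_{2r}')$. Moreover, $\Tr U=v=0$ almost everywhere on the set $E\cap B_{2r}'$, which has positive measure. Proposition~\ref{prop:local-ucp} then gives $U\equiv0$ in $B_{2r}^+$.

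\emph{Step 3: pass from the extension back to $u$.} Because $U\equiv0$, its trace gives $v=0$ on $B_{2r}'$. We also need $(-\Delta)^su=0$ there. The weighted conormal limit of $U$ vanishes trivially, and by \eqref{eq:extension-conormal} and \eqref{eq:measure-conormal} this limit equals $\kappa_s\bigl((-\Delta)^sv-F_\mu\bigr)=\kappa_s(-\Delta)^su$ in the distributional sense on $B_{2r}'$. Alternatively, the equation $(-\Delta)^su=-qv=0$ on $B_{2r}'$ holds directly by the distributional identity following \eqref{eq:distribution}. Either way, $u$ and $(-\Delta)^su$ both vanish on the open set $B_{2r}'$.

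\emph{Step 4: conclude.} The distribution $u=v+\mu$ lies in $H^{-m}(\R^n)$ for $m>n/2$. Lemma~\ref{lem:open-ucp} therefore gives $u=0$ in $\mathscr D'(\R^n)$. Restricting to $\Omega_e$ gives $\mu=0$, since $v$ vanishes there. Restricting to $\Omega$ gives $v=0$, since $\mu$ vanishes there.

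The only delicate point is in Step 3. We must make sure that $U\equiv0$ forces the vanishing of $(-\Delta)^su$ itself, and not merely of the Robin data. The interior equation handles this cleanly, since $qv=0$ on the ball. All the analytic difficulty has already been absorbed into Proposition~\ref{prop:local-ucp}.
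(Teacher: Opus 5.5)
Your proposal is correct and follows the paper's proof essentially step for step. You localize to a ball meeting $E$ in positive measure, apply Lemma~\ref{lem:extension} and Proposition~\ref{prop:local-ucp} with $b=-\kappa_s q$ to get $v=0$ and $(-\Delta)^su=-qv=0$ on that ball, and conclude with Lemma~\ref{lem:open-ucp} using $u\in H^{-m}(\R^n)$; your extra details (the countable cover, the translation, and the alternative conormal-limit justification) are all valid.
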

	
	\begin{proof}
		Choose $B_{2r}'(x_0)\Subset\Omega$ with $|E\cap B_{2r}'(x_0)|>0$. Lemma~\ref{lem:extension} and Proposition~\ref{prop:local-ucp}, with $b=-\kappa_s q$, show that the extension of $u$ vanishes in $B_{2r}^+(x_0)$. Its trace gives $u=v=0$ in $B_{2r}'(x_0)$, and the equation gives $(-\Delta)^su=0$ there. Since $u\in H^{-m}(\R^n)$ for $m>n/2$, Lemma~\ref{lem:open-ucp} implies $u=0$ on $\R^n$. Restricting to $\Omega_e$ gives $\mu=0$, and then $v=0$.
	\end{proof}
	
	\section{Proofs of the main results}\label{sec:proof}
	\begin{proof}[Proof of Theorem~\ref{thm:uniqueness}]
		For $j=1,2$, write $v_j=v_{q_j,\mu}$ and $u_j=v_j+\mu$. By \eqref{eq:equal-data} and Proposition~\ref{prop:reciprocity}, $\int_{W_1}f(-\Delta)^s(u_1-u_2)\,dx=0$ for every $f\in C_c^\infty(W_1)$. Since the same measure occurs in both solutions, $u_1-u_2=v_1-v_2\in X$. This difference and its fractional Laplacian vanish on $W_1$, so Lemma~\ref{lem:open-ucp} gives $v_1=v_2$ in $\R^n$. Set $v=v_1=v_2$. Subtracting \eqref{eq:source} for the two potentials yields
		\begin{equation}\label{eq:potential-product}
			(q_1-q_2)v=0\quad\text{almost everywhere in }\Omega.
		\end{equation}
		
		First, suppose $q_1,q_2\in C^0(\overline\Omega)$. If $q_1-q_2$ is nonzero at an interior point, there is a ball $B\Subset\Omega$ on which $|q_1-q_2|$ is bounded below by a positive constant. By \eqref{eq:potential-product}, $v=0$ almost everywhere in $B$. Since $\mu$ is supported in $\Omega_e$, $u_2=0$ in $B$, and its equation gives $(-\Delta)^su_2=0$ there. As $u_2=v+\mu\in H^{-m}(\R^n)$ for $m>n/2$, Lemma~\ref{lem:open-ucp} gives $u_2=0$ on $\R^n$, contradicting $\mu\neq0$. Thus $q_1=q_2$ in $\Omega$ and, by continuity, on $\overline\Omega$.
		
		Next, suppose $q_1,q_2\in L^\infty(\Omega)$ and $s\geq1/4$. If they differ on a set of positive measure, then $E_j=\{x\in\Omega:|q_1(x)-q_2(x)|>1/j\}$ has positive measure for some $j\in\N$. Equation \eqref{eq:potential-product} gives $v=0$ almost everywhere on $E_j$. Applying Corollary~\ref{cor:measure-ucp} to $u_2=v+\mu$ gives $\mu=0$, a contradiction. Hence $q_1=q_2$ almost everywhere in $\Omega$.
	\end{proof}
	
	\begin{proof}[Proof of Theorem~\ref{thm:single-input-point}]
		Choose $W_2\Subset\Omega_e$ containing $x_0$ with $\overline W\cap\overline{W_2}=\emptyset$. Proposition~\ref{prop:comparison-identity}, with $W_1=W$ and $\mu=\delta_{x_0}$, gives $\int_\Omega(q_1-q_2)v_{q_2,f}v_{q_1,\delta_{x_0}}\,dx=0$. Lemma~\ref{lem:positive-measure} applies to $f\,dx$ and $\delta_{x_0}$, so both interior solutions are positive almost everywhere. Since $q_1-q_2\geq0$, the integral can vanish only if $q_1=q_2$ almost everywhere in $\Omega$.
	\end{proof}
	
	The same argument applies when $\mu$ is any nonzero finite nonnegative measure supported compactly in $W_2$ and $0\not\equiv f\in C_c^\infty(W_1)$ is nonnegative. If $0\leq q_2\leq q_1$, then $\mathcal M_{q_1,\mu}(f)=\mathcal M_{q_2,\mu}(f)$ implies $q_1=q_2$ almost everywhere in $\Omega$.

	\begin{proof}[Proof of Corollary~\ref{cor:finitely_many_points}]
		Let $Q=\{q\in L^\infty(\Omega;\R):B_q\text{ is coercive on }X\}$. This is an open set, and it contains every nonnegative potential by the fractional Poincar\'e inequality. Lemma~\ref{lem:positive-measure} gives $v_{q,f}>0$ almost everywhere in $\Omega$ for $q\in Q$.
		
		Suppose $q_1,q_2\in Q$ and $(\Lambda_{q_1}-\Lambda_{q_2})f=0$ on $W_2$. The difference $v_{q_1,f}-v_{q_2,f}$ and its fractional Laplacian vanish on $W_2$. By Lemma~\ref{lem:open-ucp}, $v_{q_1,f}=v_{q_2,f}$. Subtracting their equations gives $(q_1-q_2)v_{q_2,f}=0$ almost everywhere in $\Omega$. Positivity of $v_{q_2,f}$ implies $q_1=q_2$.
		
		Identify $V$ with $\R^d$ and put $A=Q\cap V$. Since $Q$ is open in $L^\infty(\Omega)$, $A$ is open in $V$ and is a $d$-dimensional analytic model class. On $A$, the operator induced by $B_q$ depends affinely on $q$ and is invertible, so its inverse $\mathcal G_q$ depends real analytically on $q$. For $x\in W_2$, \eqref{eq:DN-kernel} and $u_{q,f}=\mathcal G_qF_f+f$ show that $\mathfrak m(q,x):=(\Lambda_qf)(x)$ is real analytic jointly in $(q,x)\in A\times W_2$: the kernel $|x-y|^{-n-2s}$ is real analytic in $x$, uniformly for $y\in\overline\Omega\cup\supp f$ on compact subsets of $W_2$. By the preceding paragraph, $\mathfrak m(q_1,\cdot)-\mathfrak m(q_2,\cdot)$ is not identically zero on $W_2$ whenever $q_1\ne q_2$ in $A$. Thus \cite[Theorem~2.6]{alberti2026analytic}, applied with $Z=W_2$ and normalized Lebesgue measure, shows that for Lebesgue-almost every choice of $2d+1$ points in $W_2$, the corresponding observations distinguish all $q\in A$. Since every nonnegative potential in $V$ lies in $A$, this proves the corollary.
	\end{proof}

	\section*{Statements and declarations}
	
	\para{Data availability}
	No datasets were generated or analyzed during the current study.
	
	\para{Conflict of interests}
	The authors declare that they have no conflicts of interest.
	
	\para{Use of AI tools}
	An initial manuscript containing the mathematical results and arguments was prepared by the authors before ChatGPT Pro was used. ChatGPT Pro was used only to assist with language editing and organization of the text. The authors reviewed the revisions and take full responsibility for the final manuscript.
	
	\para{Acknowledgments} The question studied in this paper took shape through discussions with Tobias Weth during Y.-H. L.'s visit to Goethe University Frankfurt. We are especially grateful to him for his insights in the early development of this work and for his generosity and kindness. Y.-H. L. is partially supported by the National Science and Technology Council (NSTC) of Taiwan under project 113-2628-M-A49-003. Y.-H. L. acknowledges financial support from the Alexander von Humboldt Foundation through the Henriette Herz Scouting Programme and the hospitality of Universit\"at Duisburg-Essen.
	
	\bibliographystyle{alpha}
	\bibliography{refs}
\end{document}